\providecommand{\U}[1]{\protect\rule{.1in}{.1in}}
\newtheorem{theorem}{Theorem}
\newtheorem{lemma}{Lemma}
\newtheorem{corollary}{Corollary}
\numberwithin{equation}{section}
\begin{document}
\title[Gradient Shrinking Sasaki-Ricci Solitons]{Geometry and Topology of Gradient Shrinking Sasaki-Ricci Solitons}
\author{$^{\ast}$Shu-Cheng Chang}
\address{$^{\ast}$Shanghai Institute of Mathematics and Interdisciplinary Sciences,
Shanghai, 200433, China}
\email{scchang@math.ntu.edu.tw}
\author{$^{\dag}$Yingbo Han}
\address{$^{\dag}${School of Mathematics and Statistics, Xinyang Normal University}\\
Xinyang,464000, Henan, P.R. China}
\email{{yingbohan@163.com}}
\author{$^{\ddag}$Chin-Tung Wu}
\address{$^{\ddag}$Department of Applied Mathematics, National Pingtung University,
Pingtung 90003, Taiwan}
\email{ctwu@mail.nptu.edu.tw }
\thanks{2020 \emph{Mathematics Subject Classification.} 53C55, 53C25, 53C12}
\thanks{$^{\ast}$Partially supported in part by Startup Foundation for Advanced
Talents of the Shanghai Institute for Mathematics and Interdisciplinary
Sciences (No.2302-SRFP-2024-0049) $^{\ddag}$Research supported in part by NSTC
grant 113-2115-M-153-003, Taiwan. $^{\dag}$Research partially supported by
NSFC 11971415 and NSF of Henan Province 252300421497.}
\keywords{Sasaki-Ricci soliton, Sasaki-Einstein, K\"{a}hler-Ricci soliton, Positive
sectional curvature, Positive transverse holomorphic bisectional curvature,
connected at infinity}

\begin{abstract}
In this paper, we study the geometry and topology of complete gradient
shrinking Sasaki-Ricci solitons. We first prove that they must be connected at
infinity. This is a Sasaki analogue of gradient shrinking K\"{a}hler-Ricci
solitons. Secondly, with the positive sectional curvature or positive
transverse holomorphic bisectional curvature, we show that they must be
compact. All results are served as a generalization of Perelman in dimension
three, of Naber in dimension four, and of Munteanu-Wang in all dimensions, respectively.

\end{abstract}
\maketitle

\section{Introduction}

It is the very first paper by Smoczyk-Wang-Zhang \cite{swz} to introduce the
Sasaki-Ricci flow on $M\times\lbrack0,T)$%
\[%
\begin{array}
[c]{c}%
\frac{d}{dt}g^{T}(x,t)=-Ric^{T}(x,t)
\end{array}
\]
and to prove the existence theorem of Sasaki $\eta$-Einstein metrics on
Sasakian $(2n+1)$-manifolds $(M,\xi,\eta,g,\Phi)$ when the basic first Chern
class is negative $c_{1}^{B}(M)<0$ or null $c_{1}^{B}(M)=0$. However, it is
wild open when a compact Sasakian $(2n+1)$-manifold is transverse Fano
$c_{1}^{B}(M)>0.$ We refer to \cite{cj}, \cite{chlw}, \cite{cllw} and
\cite{cclw} for later developments along this direction.

On the other hand, Sasaki-Ricci solitons $(M,g^{T},\psi,X)$ are natural
generalizations of $\eta$-Einstein Sasakian $(2n+1)$-manifolds. They are self
similar solutions to the Sasaki-Ricci flow and play a crucial role in the
study of singularities models of the flow. More precisely, we call
$(M,g^{T},\psi,X)$ a gradient Sasaki-Ricci solitons (or a transverse
K\"{a}hler-Ricci soliton) if for a real Hamiltonian basic function $\psi$ with
respect to the Hamiltonian holomorphic vector field $X$ such that
\begin{equation}%
\begin{array}
[c]{c}%
\psi=\sqrt{-1}\eta\left(  X\right)  \text{ \textrm{and} }d\eta(X,\cdot
)=2\sqrt{-1}\overline{\partial}_{B}\psi,
\end{array}
\label{pr1}%
\end{equation}
and the transverse K\"{a}hler metric $g^{T}=(g_{j\overline{k}})$ with
$R_{j\overline{k}}^{T}=R_{j\overline{k}}+2g_{j\overline{k}}$, we have
\[%
\begin{array}
[c]{c}%
R_{j\overline{k}}^{T}+\psi_{j\overline{k}}=(A+2)g_{j\overline{k}}\text{
\textrm{or} }R_{j\overline{k}}+\psi_{j\overline{k}}=Ag_{j\overline{k}}.
\end{array}
\]
It is called expanding, steady and shrinking if for a constant $A<-2;$%
\ $A=-2;$\ $-2<A$, respectively.
Up to
$%
D%
$-homothetic,
one can take
$A=2n$ such that
\begin{equation}%
\begin{array}
[c]{c}%
R_{j\overline{k}}+\psi_{j\overline{k}}=2ng_{j\overline{k}}%
\end{array}
\label{1}%
\end{equation}
is the gradient shrinking Sasaki-Ricci soliton. It is Sasaki-Einstein if
$\psi$ is constant due to $Ric(\xi,\xi)=2n.$

In the present paper, our first objective is to show that the gradient
shrinking Sasaki-Ricci soliton must be connected at infinity. This is a Sasaki
analogue of gradient shrinking K\"{a}hler-Ricci solitons due to Munteanu-Wang
\cite{mw1}.

\begin{theorem}
There exists only one end for a gradient shrinking Sasaki-Ricci soliton
$(M,g^{T},\psi,X)$.
\end{theorem}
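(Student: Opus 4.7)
The plan is to transplant the Munteanu-Wang strategy \cite{mw1} for gradient shrinking K\"ahler-Ricci solitons into the transverse K\"ahler setting. I would introduce the drift basic Laplacian $L^{B}f := \Delta^{B}f - \langle \nabla^{T}\psi, \nabla^{T}f\rangle_{g^{T}}$ acting on basic functions, together with the weighted measure $d\mu := e^{-\psi}\,dV_{g}$. The soliton identity (\ref{1}) then reads $R_{j\overline{k}}^{T}+\psi_{j\overline{k}}=2ng_{j\overline{k}}$, a transverse Bakry--\'Emery Ricci positivity; coupled with $\mathrm{Ric}(\xi,\xi)=2n$, it yields a genuine positive lower bound for the weighted Ricci of $(M,g)$. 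A preliminary step is to record the transverse analogue of the Cao--Zhou weighted volume/integral estimates, so that the weighted volume of geodesic balls is at most Gaussian and the boundary terms at infinity in all integration-by-parts arguments below vanish.

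Assume for contradiction that $(M,g^{T},\psi,X)$ has at least two ends. I would apply the Li--Tam construction, adapted to $L^{B}$ in the basic category, to obtain a bounded nonconstant basic function $u$ separating two ends and satisfying
\[
L^{B}u = 0, \qquad \int_{M}|\nabla^{T}u|^{2}\,e^{-\psi}\,dV < \infty.
\]
The Bakry--\'Emery lower bound from (\ref{1}), together with the weighted volume estimates above, is exactly what is needed to run Li--Tam for the drift Laplacian on basic functions and to produce $u$ with the claimed regularity.

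Next I would exploit the transverse K\"ahler structure via the weighted transverse Bochner identity applied to $|\bar{\partial}^{T}u|^{2}$. Using (\ref{1}) to replace the transverse Ricci by $2ng^{T} - \nabla^{T}d\psi$ and integrating against $d\mu$, I expect an identity of the schematic form
\[
\int_{M}\bigl(|\nabla^{T}\bar{\partial}^{T}u|^{2} + 2n|\bar{\partial}^{T}u|^{2}\bigr)e^{-\psi}\,dV = 0,
\]
forcing $\bar{\partial}^{T}u \equiv 0$. By reality, $\partial^{T}u \equiv 0$ as well, and since $u$ is basic we have $\xi u = 0$, so $du \equiv 0$ and $u$ is constant, contradicting the two-ends hypothesis.

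The principal obstacle I anticipate is the weighted Bochner step in the transverse setting. Although $\nabla^{T}\psi$ is horizontal (because $\psi$ is basic) and hence $g^{T}$-orthogonal to $\xi$, the Reeb direction still enters commutator identities among $\partial^{T}$, $\bar{\partial}^{T}$ and the drift term, and one must track these carefully to secure the cancellations that yield the clean identity above. A secondary difficulty is the justification of vanishing boundary terms at infinity: in the K\"ahler-Ricci soliton case this follows from the Cao--Zhou strict convexity of $\psi$ outside a compact set, and the Sasaki analogue of that convexity statement (for the transverse Hessian of $\psi$) will have to be established before the standard cut-off integration by parts against $e^{-\psi}dV$ can be pushed through. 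Finally, because the leaf space of the Reeb foliation is in general not a smooth manifold, the argument must be carried out intrinsically on $M$ using basic tensors, rather than being reduced to the quotient K\"ahler-Ricci soliton case.
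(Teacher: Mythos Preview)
Your proposal has two genuine gaps that prevent the argument from closing.

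\textbf{The weight and the Li--Tam step.} You take the weight $e^{-\psi}$, so that the Bakry--\'Emery Ricci tensor $\mathrm{Ric}+\mathrm{Hess}\,\psi$ equals $2n\,g$ and is positive. But precisely because $\psi$ grows quadratically (Cao--Zhou type estimate), the measure $e^{-\psi}dV$ has \emph{finite} total mass, and every end of $M$ is $\psi$-parabolic for $\Delta_{\psi}$. In that regime the Li--Tam construction produces no nonconstant bounded $\Delta_{\psi}$-harmonic function separating ends, so your contradiction never gets off the ground. The paper's key maneuver is to reverse the sign: set $\varphi=-a\psi$ (weight $e^{a\psi}$), so that each end has infinite weighted volume. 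Of course the Bakry--\'Emery tensor for \emph{this} weight is no longer positive, so one cannot simply quote comparison theory; instead the paper devotes a substantial argument (Nakai's Evans potential plus a De Giorgi--Nash--Moser iteration using the soliton potential estimates (\ref{21}), (\ref{22})) to show there are no $\varphi$-parabolic ends. Only then does Li--Tam yield a nonconstant $\varphi$-harmonic $u$ with $\int_M|\nabla u|^2 e^{-\varphi}<\infty$.

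\textbf{Basic versus full functions.} You propose to run Li--Tam ``in the basic category'' for the drift basic Laplacian $L^{B}$, obtaining a basic $u$. There is no justification for this: the ends in the statement are ends of $(M,g)$, and the Li--Tam barrier/limit construction produces a function on $M$ that has no reason to be invariant along Reeb orbits. The paper works with the full weighted Laplacian on arbitrary (not basic) functions, and its vanishing theorem (Theorem~\ref{Thm2}) explicitly controls the extra Reeb-direction pieces $u_{00}$, $u_{0\alpha}$, $u_{0\bar\alpha}$ alongside $u_{\alpha\bar\beta}$. The mechanism there is not a Bakry--\'Emery Bochner inequality at all---indeed with $\varphi=-a\psi$ the weighted Ricci is not bounded below---but rather the transverse-K\"ahler condition $\varphi_{\alpha\beta}=0$ (inherited from the Hamiltonian holomorphic structure of $\psi$), which permits a direct integration-by-parts estimate on $\int |u_{\alpha\bar\beta}|^2 e^{-\varphi}$. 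Your schematic identity $\int(|\nabla^{T}\bar\partial^{T}u|^2+2n|\bar\partial^{T}u|^2)e^{-\psi}=0$ is therefore both aimed at the wrong weight and proved by the wrong mechanism.
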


For a three dimensional gradient shrinking Ricci soliton with bounded
curvature, its sectional curvature is necessarily nonnegative due to Ivey
\cite{i} and Hamilton \cite{h}. In \cite{p}, Perelman proved that a three
dimensional noncollapsing gradient shrinking Ricci soliton with positive and
bounded sectional curvature must be compact. Later, Naber \cite{n} proved that
any $4$-dimensional gradient shrinking Ricci soliton with bounded nonnegative
curvature operator and positive Ricci curvature must be compact. Recently,
Munteanu-Wang \cite{mw2} generalized the result that a gradient shrinking
Ricci soliton with positive sectional curvature must be compact. Later,
Wu-Zhang \cite{wz} generalized this result to a gradient shrinking
K\"{a}hler-Ricci soliton with positive bisectional curvature is compact.

\begin{theorem}
\label{T1}Let $(M,g^{T},\psi,X)$ be an $(2n+1)$-dimensional gradient shrinking
Sasaki-Ricci soliton with positive sectional curvature or positive transverse
holomorphic bisectional curvature. Then it must be compact.
\end{theorem}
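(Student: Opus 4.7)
The strategy is to argue by contradiction: assume $(M,g^{T},\psi,X)$ is noncompact and derive a contradiction from the curvature hypotheses. I would first exploit the soliton equation (\ref{1}) together with the Sasaki analogues of Hamilton's basic identities (a transverse version of $R+\Delta\psi=n$ and $R+|\nabla\psi|^{2}-2\psi=\mathrm{const}$) to establish that $\psi$ is a proper basic exhaustion function with quadratic asymptotics $\psi(x)\sim c\,r(x)^{2}$ at infinity, where $r$ is the Riemannian distance from a minimum point $x_{0}$ and $c>0$. This identifies the level sets $\Sigma_{c}=\{\psi=c\}$, for $c\gg 1$, as compact smooth Reeb-invariant hypersurfaces that exhaust the unique end supplied by Theorem 1.

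Next, I would reduce the problem to the K\"ahler-Ricci soliton setting. Since $\xi$ is a unit Killing vector field with $\mathrm{Ric}(\xi,\xi)=2n$ and the soliton data $(g^{T},\psi)$ is basic, the transverse geometry is, at least locally on the leaf space of the Reeb foliation, a gradient shrinking K\"ahler-Ricci soliton. The hypotheses on $M$ pass to the transverse structure: the identity $\mathrm{Ric}^{T}(X,Y)=\mathrm{Ric}(X,Y)+2g(X,Y)$ on horizontal vectors turns positive Ricci on $M$ into positive transverse Ricci, while O'Neill's formula for the Reeb submersion turns nonnegative ambient sectional curvature into nonnegative transverse sectional (and hence transverse holomorphic bisectional) curvature. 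At this stage the Munteanu--Wang theorem \cite{mw2} becomes the natural target.

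The remainder of the argument would combine two ingredients. On the one hand, nonnegative sectional curvature, one end, and positive Ricci preclude any metric line in $M$ by the Cheeger--Gromoll splitting theorem, so $M$ carries a compact soul $S$ and is diffeomorphic to a vector bundle over $S$. On the other hand, the transverse K\"ahler-Ricci soliton reduction, together with the Munteanu--Wang integration-by-parts and weighted-volume estimates adapted to the basic weighted measure $e^{-\psi}\,dV_{g}$, should force the transverse structure to close up. Together these two pieces contradict the assumption that $M$ is noncompact, completing the argument.

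The main obstacle will be making the transverse reduction rigorous when the Reeb foliation is not quasi-regular: the transverse K\"ahler-Ricci soliton is then defined only locally, and one cannot simply invoke Munteanu--Wang on a global K\"ahler quotient. I expect that the resolution is to execute the Munteanu--Wang argument directly on $M$, working throughout with the transverse Laplacian $\Delta_{B}$, the basic Bochner identity, and the weighted measure $e^{-\psi}\,dV_{g}$, while isolating the Reeb contribution using the Killing property of $\xi$ together with $\mathrm{Ric}(\xi,\xi)=2n$. This basic-function adaptation of the Munteanu--Wang machinery, parallel in spirit to the one required for Theorem 1, is where the technical heart of the proof will lie.
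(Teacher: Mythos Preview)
Your final paragraph points in the right direction: the paper does exactly what you suspect one must do, namely run the Munteanu--Wang \cite{mw2} argument directly on $M$ using basic quantities, rather than passing to a global K\"ahler quotient. However, the earlier parts of your proposal are detours that do not contribute. The Cheeger--Gromoll soul theorem does not yield a contradiction: a noncompact manifold with nonnegative sectional curvature and positive Ricci is perfectly consistent with being a vector bundle over a compact soul, so nothing forces compactness there. Likewise the attempted global reduction to a K\"ahler-Ricci soliton is, as you yourself note, not available in the irregular case; the paper never attempts it.

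What your proposal is missing is the actual mechanism of the Munteanu--Wang argument in this setting, and that mechanism is quite specific. The key object is the smallest eigenvalue $\lambda$ of $\mathrm{Ric}_{D}-g^{T}$ on the horizontal distribution; positive Ricci on $M$ makes $\lambda>0$ because $(R_{ij}-g_{ij})v_iv_j=\sum_{k}K(v,e_k)$. From the soliton identity $\Delta_{B,\psi}R_{ij}=4n(R_{ij}-g_{ij})-2R_{kl}R_{ikjl}$ and nonnegative sectional curvature one derives, in the barrier sense, $\Delta_{B,\psi}\lambda\leq(4n-2)\lambda$. A Chow--Lu--Yang style maximum principle applied to $\lambda-b\psi^{-1}-nb\psi^{-2}$ then gives a global quantitative lower bound $\mathrm{Ric}_{D}-g^{T}\geq b/\psi$. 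Feeding this into $\nabla R=2(\mathrm{Ric}_{D}-g^{T})(\nabla\psi)$ along integral curves of $-\nabla\psi/|\nabla\psi|^{2}$ produces $R\geq b\ln\psi$, hence $R\to\infty$ at infinity. This contradicts the elementary Stokes-theorem estimate $\int_{D(r)}R\leq 2n(2n+1)\,\mathrm{Vol}(D(r))$ once combined with Bishop--Gromov, because a ball far out has scalar curvature of order $\ln r$ on a set of comparable volume. None of these steps --- the eigenvalue inequality, the barrier comparison, the integral-curve estimate for $R$, and the volume contradiction --- appear in your outline, and they are the substance of the proof.
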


In the paper of B\"{o}hm-Wilking \cite{bw}, they proved that any compact
Riemannian manifold of all dimensions with positive curvature operators must
be space form. As a consequence of Theorem \ref{T1}, we obtain a Sasaki
analogue of gradient shrinking Ricci solitons as follows.

\begin{corollary}
Let $(M,g^{T},\psi,X)$ be an $(2n+1)$-dimensional complete gradient shrinking
Sasaki-Ricci soliton with nonnegative curvature operator and positive Ricci
curvature. Then $(M,g)$ must be a finite quotient of the sphere $\mathbb{S}%
^{2n+1}.$
\end{corollary}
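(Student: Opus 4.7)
The strategy is to combine Theorem \ref{T1} with Hamilton's strong maximum principle for the curvature operator and the cited classification of B\"ohm-Wilking \cite{bw}.

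First, since nonnegative curvature operator implies nonnegative sectional curvature, the hypotheses of Theorem \ref{T1} are satisfied, so $M$ is compact.

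Second, I would upgrade ``nonnegative curvature operator'' to ``strictly positive curvature operator.'' Because $(M,g^{T},\psi ,X)$ is a gradient shrinking soliton, the Ricci flow solution $g(t)$ starting at $g$ is self-similar, obtained from $g$ by a one-parameter family of diffeomorphisms together with rescaling; in particular the rank of the curvature operator is preserved in time. Hamilton's strong maximum principle applied to the evolution equation for the curvature operator then yields that, for every $t>0$, the null subbundle $\mathcal{K}_{t}\subset\Lambda^{2}TM$ is parallel and invariant under the holonomy representation. Using positive Ricci curvature (which rules out any local Riemannian splitting with a flat factor) together with the extra rigidity provided by the Sasakian structure (the Reeb vector field $\xi$ is Killing and the sectional curvature of every plane containing $\xi$ equals one), one checks via the Berger--Simons holonomy classification that no nontrivial parallel null subbundle $\mathcal{K}_{t}$ can coexist with these data. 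Hence $\mathcal{K}_{t}=0$ and the curvature operator is strictly positive at some (and hence every) $t>0$; pulling back by the soliton diffeomorphism, $g$ itself has positive curvature operator.

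Finally, applying the theorem of B\"ohm--Wilking \cite{bw} to $(M,g)$ with positive curvature operator gives that $M$ is a finite isometric quotient of the round sphere $\mathbb{S}^{2n+1}$.

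\emph{Main obstacle.} The delicate step is the second one: ruling out $\mathcal{K}_{t}\neq 0$. A parallel subbundle of $\Lambda^{2}$ need not immediately force a de Rham splitting; one must systematically eliminate each Berger holonomy possibility (K\"ahler, quaternion-K\"ahler, and exceptional reductions, as well as the compact symmetric cases) using positivity of the Ricci tensor combined with the specific constraints imposed by the Sasakian geometry and the shrinking soliton identity (\ref{1}).
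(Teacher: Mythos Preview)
The paper gives no proof of this corollary beyond the sentence preceding it: compactness from Theorem~\ref{T1} plus the cited B\"ohm--Wilking theorem. Your proposal follows exactly this outline for the first and third steps, so on those you agree with the paper.

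Where you go further is in your second step. You are right that the B\"ohm--Wilking theorem, as quoted in the paper, is stated for \emph{positive} curvature operator, whereas the corollary only assumes nonnegative curvature operator; the paper does not comment on this discrepancy. Your idea---use the self-similar Ricci flow associated to the soliton together with Hamilton's strong maximum principle to force the null space of the curvature operator to be a parallel, holonomy-invariant subbundle of $\Lambda^2TM$---is the standard mechanism for upgrading nonnegative to positive in this setting, and it is a reasonable way to try to close the gap.

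That said, your execution of this step is where the real work lies, and as you yourself flag, it is incomplete. A parallel subbundle of $\Lambda^2TM$ does not automatically yield a de Rham splitting of $TM$, so ``positive Ricci rules out a flat factor'' is not by itself enough; you must actually carry out the Berger holonomy case analysis (or invoke a reference that does). In the Sasakian situation there are shortcuts available that you do not exploit: for instance, a locally symmetric Sasakian manifold is already a space form (a classical result of Okumura/Tanno), which together with the nonnegative-curvature-operator dichotomy (positive after short time, or locally symmetric) would finish without a full holonomy discussion. Alternatively one can cite the rigidity results for compact shrinking solitons with nonnegative curvature operator (e.g.\ in the spirit of Petersen--Wylie \cite{pw}) rather than redo the maximum-principle argument by hand. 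Either route would turn your outline into a complete proof more efficiently than the Berger--Simons case check you propose.
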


The paper is organized as follows. In section $2$, we prove on a Sasakian
manifold with a given weight function $\varphi$, any $\varphi$-harmonic
function with finite weighted Dirichlet integral is transversely plurihamonic.
In section $3,$ we show that a gradient shrinking Sasaki-Ricci soliton has
only one nonparabolic end. In section $4$, we claim that a gradient shrinking
Sasaki-Ricci soliton with the positive sectional curvature or positive
transverse holomorphic bisectional curvature must be compact.

\section{A vanishing Theorem for $\varphi$-harmonic functions}

In this section, we proof the following Theorem \ref{Thm2}. It generalizes a
result by Munteanu-Wang \cite{mw1} in the K\"{a}hler case, which proved that
on a K\"{a}hler manifold with a given weight function $\varphi$, any $\varphi
$-harmonic function with finite weighted Dirichlet integral is plurihamonic.

\begin{theorem}
\label{Thm2}For a Sasakian manifold $(M,\xi,\eta,g,\Phi)$. We assume that
$\varphi:M\rightarrow\mathbb{R}$ is a basic function which satisfies
$\varphi_{\alpha\beta}=0$. Additionally, there exists a constant $C>0$ such
that
\[%
\begin{array}
[c]{c}%
|\nabla\varphi|\leq C(d(x_{0},x)+1)
\end{array}
\]
for any $x\in M$. Then any real solution $u$ to
\[%
\begin{array}
[c]{c}%
\Delta_{\varphi}u:=\Delta u-\left\langle \nabla u,\nabla\varphi\right\rangle
=0\text{\ \textrm{with}\ }\int_{M}|\nabla u|^{2}e^{-\varphi}<\infty
\end{array}
\]
must be transversely plurihamonic and $u_{00}=0$. Moreover, it is constant if
$\varphi$ is proper on $M$.
\end{theorem}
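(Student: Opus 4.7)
The plan is a weighted Bochner argument with Lipschitz cutoffs, following the K\"{a}hler strategy of Munteanu-Wang \cite{mw1}, but split into two essentially independent pieces: the vanishing of the Reeb-direction Hessian $u_{00}$, which requires only that $\xi$ is Killing and $\varphi$ is basic, and the transverse pluriharmonicity $u_{\alpha\overline{\beta}}=0$, which requires the hypothesis $\varphi_{\alpha\beta}=0$ and is the true Sasakian analogue of \cite{mw1}.

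For the Reeb-direction vanishing, I would first observe that $\Delta_{\varphi}$ commutes with $\xi$ on functions: since $\mathcal{L}_{\xi}g=0$ and $\xi\varphi=0$, a direct check gives
\begin{equation*}
\xi(\Delta_{\varphi}u) = \Delta(\xi u) - \langle \nabla(\xi u),\nabla\varphi\rangle = \Delta_{\varphi}(\xi u).
\end{equation*}
Hence $u_{0}:=\xi u$ is itself $\varphi$-harmonic, so $\tfrac{1}{2}\Delta_{\varphi}u_{0}^{2}=|\nabla u_{0}|^{2}$. Testing against $\chi_{R}^{2}e^{-\varphi}$ for a standard Lipschitz cutoff with $\chi_{R}\equiv 1$ on $B_{R}(x_{0})$, $\text{supp}\,\chi_{R}\subset B_{2R}(x_{0})$ and $|\nabla\chi_{R}|\leq 2/R$, a Caccioppoli-type estimate yields
\begin{equation*}
\int_{M}\chi_{R}^{2}|\nabla u_{0}|^{2}e^{-\varphi} \leq \frac{C}{R^{2}}\int_{M}|\nabla u|^{2}e^{-\varphi}\longrightarrow 0 \text{ as } R\to\infty,
\end{equation*}
using $u_{0}^{2}\leq|\nabla u|^{2}$ and the finite weighted Dirichlet integral. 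Thus $\nabla u_{0}\equiv 0$ on $M$, so $u_{0}$ is a constant and $u_{00}=\xi(u_{0})=0$.

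For the transverse pluriharmonicity, set $v:=|\partial_{B}u|^{2}=g^{\alpha\overline{\beta}}u_{\alpha}u_{\overline{\beta}}$. In foliation-normal coordinates, a transverse complex Bochner-Kodaira computation, combined with $\Delta_{\varphi}u=0$ and the crucial hypothesis $\varphi_{\alpha\beta}=0$, yields schematically
\begin{equation*}
\Delta_{\varphi}v = |u_{\alpha\overline{\beta}}|^{2} + |u_{\alpha\beta}|^{2} + (R^{T}_{\alpha\overline{\beta}}+\varphi_{\alpha\overline{\beta}})u^{\alpha}u^{\overline{\beta}} + \text{(divergence-type terms)}.
\end{equation*}
The assumption $\varphi_{\alpha\beta}=0$ is exactly what is needed to cancel the cross term $u^{\alpha\beta}\varphi_{\alpha\beta}$ that would otherwise appear when commuting $\nabla_{\alpha}$ past $\Delta_{\varphi}$. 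Pairing against $\chi_{R}^{2}e^{-\varphi}$ and integrating, the Bakry-Emery-Ricci term must be reorganized by a further integration by parts; the resulting cutoff error is bounded by
\begin{equation*}
\frac{C}{R^{2}}\int_{B_{2R}}|\nabla u|^{2}e^{-\varphi} \;+\; \frac{C}{R}\int_{B_{2R}\setminus B_{R}}|\nabla u|^{2}\,|\nabla\varphi|\,e^{-\varphi},
\end{equation*}
which, thanks to the linear bound $|\nabla\varphi|\leq C(d(x_{0},x)+1)$ and the tail of the integrable weighted Dirichlet energy, tends to $0$ as $R\to\infty$. Therefore $\int_{M}|u_{\alpha\overline{\beta}}|^{2}e^{-\varphi}=0$, i.e. $u$ is transversely pluriharmonic.

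Finally, when $\varphi$ is proper, the two previous conclusions give $\Delta u = u_{00}+g^{\alpha\overline{\beta}}u_{\alpha\overline{\beta}} = 0$, so the $\varphi$-harmonic equation forces $\langle \nabla u,\nabla\varphi\rangle\equiv 0$ on $M$. Picking any regular value $K$ of $\varphi$ (Sard), the sublevel set $\Omega_{K}:=\{\varphi<K\}$ is compact with smooth boundary and outer unit normal $\nabla\varphi/|\nabla\varphi|$, so Green's identity yields
\begin{equation*}
\int_{\Omega_{K}}|\nabla u|^{2}dV = \int_{\partial\Omega_{K}}u\,\frac{\langle \nabla u,\nabla\varphi\rangle}{|\nabla\varphi|}\,dS = 0,
\end{equation*}
and letting $K\to\infty$ forces $u$ to be constant. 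The main obstacle is the transverse Bochner-Kodaira identity above: one has to track how the Reeb direction interferes with the transverse covariant derivatives, using the Sasakian curvature identity $R(X,\xi)Y=g(\xi,Y)X-g(X,Y)\xi$ to verify that the Reeb-direction commutators produce only terms that either vanish or are controlled by the first step, and carefully identify the integration by parts in which $\varphi_{\alpha\beta}=0$ cancels the cross term.
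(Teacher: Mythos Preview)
Your Step~1 is a genuine and cleaner alternative to what the paper does. The paper treats all second-order pieces $|u_{\alpha\bar\beta}|^{2}$, $|u_{0\alpha}|^{2}$, $|u_{0\bar\alpha}|^{2}$, $|u_{00}|^{2}$ together in one large integration-by-parts computation (equations~(\ref{11})--(\ref{19b})), because the identity $u_{\alpha\bar\alpha}=\Delta u-u_{00}$ couples the transverse and Reeb pieces. By observing that $\xi$ Killing and $\varphi$ basic force $u_{0}=\xi u$ to be $\varphi$-harmonic with $\int u_{0}^{2}e^{-\varphi}\leq\int|\nabla u|^{2}e^{-\varphi}<\infty$, hence constant by a Caccioppoli inequality, you decouple the Reeb direction at the outset; this also makes $|\partial_{B}u|^{2}$ basic and reduces Step~2 to a purely transverse K\"ahler problem. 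Your Step~3 coincides with the paper's.

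Step~2, however, is not right as you describe it. The whole point of the K\"ahler argument in \cite{mw1}, and of the paper's computation~(\ref{12}), is that the transverse Ricci term \emph{never appears}: one integrates $\int|u_{\alpha\bar\beta}|^{2}\phi^{2}e^{-\varphi}$ by parts directly, and the commutation $u_{\alpha\bar\beta\bar\alpha}=u_{\alpha\bar\alpha\bar\beta}$ (valid because the transverse curvature has no $(0,2)$ part) replaces the third derivative by $(\Delta u-u_{00})_{\bar\beta}$ with no curvature correction. Your Bochner formula for $\Delta_{\varphi}|\partial_{B}u|^{2}$ will indeed produce the Bakry--\'Emery Ricci term $(R^{T}_{\alpha\bar\beta}+\varphi_{\alpha\bar\beta})u^{\alpha}u^{\bar\beta}$, but this does \emph{not} integrate by parts into a pure cutoff error as you claim: since there is no curvature hypothesis in the theorem, a naked Ricci term cannot be bounded. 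What actually happens if you integrate the Ricci term by parts (writing $R^{T}{}_{\alpha}{}^{\gamma}u_{\gamma}$ as a commutator of third derivatives and then integrating once more) is that you recover $\int|u_{\alpha\bar\beta}|^{2}-\int|u_{\alpha\beta}|^{2}$ plus $\varphi$- and cutoff-terms; combined with the Bochner contribution $|u_{\alpha\bar\beta}|^{2}+|u_{\alpha\beta}|^{2}$ this leaves $2\int|u_{\alpha\bar\beta}|^{2}$, and only then does the argument close. This is exactly the direct computation the paper performs, reached by a detour. The remaining $\varphi$-terms are then handled as in~(\ref{18})--(\ref{19}), and that is where $\varphi_{\alpha\beta}=0$ is used: it kills the term $\int\operatorname{Re}(u_{\alpha}u_{\beta}\varphi_{\bar\alpha\bar\beta})\phi^{2}e^{-\varphi}$ arising from a second integration by parts, not a cross term of the form $u^{\alpha\beta}\varphi_{\alpha\beta}$.
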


\begin{proof}
For a fixed point $x_{0}\in M$, let the cut-off function $\phi$ be defined by%
\[%
\begin{array}
[c]{c}%
\phi(x)=\left\{
\begin{array}
[c]{ll}%
1 & \mathrm{on}\text{ }B_{x_{0}}(R)\\
\frac{2R-d(x_{0},x)}{R} & \mathrm{on}\text{ }B_{x_{0}}(2R)\backslash B_{x_{0}%
}(R)\\
0 & \mathrm{on}\text{ }M\backslash B_{x_{0}}(2R).
\end{array}
\right.
\end{array}
\]
We then consider the following integration for second derivatives of $u$
\begin{equation}%
\begin{array}
[c]{ll}
& \int_{M}\left[  |u_{\alpha\bar{\beta}}|^{2}+|u_{0\alpha}|^{2}+|u_{0\bar
{\alpha}}|^{2}+|u_{00}|^{2}\right]  \phi^{2}e^{-\varphi}\\
= & \int_{M}u_{\alpha\bar{\beta}}u_{\bar{\alpha}\beta}\phi^{2}e^{-\varphi
}+u_{0\alpha}u_{0\bar{\alpha}}\phi^{2}e^{-\varphi}+u_{0\bar{\alpha}}%
u_{0\alpha}\phi^{2}e^{-\varphi}+u_{00}u_{00}\phi^{2}e^{-\varphi},
\end{array}
\label{11}%
\end{equation}
here $u_{0}=\xi u.$ We investigate each term in (\ref{11}). Integration by
parts implies
\begin{equation}%
\begin{array}
[c]{ll}
& \int_{M}u_{\alpha\bar{\beta}}u_{\bar{\alpha}\beta}\phi^{2}e^{-\varphi}\\
= & \int_{M}\mathrm{\operatorname{Re}}(u_{\alpha\bar{\beta}}[u_{\beta
\bar{\alpha}}+\sqrt{-1}g_{\alpha\bar{\beta}}u_{0}])\phi^{2}e^{-\varphi}%
=\int_{M}\mathrm{\operatorname{Re}}(u_{\alpha\bar{\beta}}u_{\beta\bar{\alpha}%
})\phi^{2}e^{-\varphi}\\
= & -\int_{M}\mathrm{\operatorname{Re}}(u_{\alpha\bar{\beta}\bar{\alpha}%
}u_{\beta})\phi^{2}e^{-\varphi}-\int_{M}\mathrm{\operatorname{Re}}%
(u_{\alpha\bar{\beta}}u_{\beta}[(\phi^{2})_{\bar{\alpha}}-\varphi_{\bar
{\alpha}}\phi^{2}])e^{-\varphi}\\
= & -\int_{M}\mathrm{\operatorname{Re}}(u_{\alpha\bar{\alpha}\bar{\beta}%
}u_{\beta})\phi^{2}e^{-\varphi}-\int_{M}\mathrm{\operatorname{Re}}%
(u_{\alpha\bar{\beta}}u_{\beta}[(\phi^{2})_{\bar{\alpha}}-\varphi_{\bar
{\alpha}}\phi^{2}])e^{-\varphi}\\
= & \int_{M}\mathrm{\operatorname{Re}}([u_{00}-\Delta u]_{\bar{\beta}}%
u_{\beta})\phi^{2}e^{-\varphi}-\int_{M}\mathrm{\operatorname{Re}}%
(u_{\alpha\bar{\beta}}u_{\beta}[(\phi^{2})_{\bar{\alpha}}-\varphi_{\bar
{\alpha}}\phi^{2}])e^{-\varphi}\\
= & \int_{M}\mathrm{\operatorname{Re}}(u_{00\bar{\beta}}u_{\beta}-(\Delta
u)_{\bar{\beta}}u_{\beta})\phi^{2}e^{-\varphi}-\int_{M}%
\mathrm{\operatorname{Re}}(u_{\alpha\bar{\beta}}u_{\beta}[(\phi^{2}%
)_{\bar{\alpha}}-\varphi_{\bar{\alpha}}\phi^{2}])e^{-\varphi}.
\end{array}
\label{12}%
\end{equation}
here $\Delta u=u_{\alpha\bar{\alpha}}+u_{00}$ and $\mathrm{\operatorname{Re}%
}(z):=\frac{z+\overline{z}}{2}$ denotes the real part of the complex number
$z.$ Integration by parts again
\begin{equation}%
\begin{array}
[c]{ll}
& \int_{M}u_{0\alpha}u_{0\bar{\alpha}}\phi^{2}e^{-\varphi}=\int_{M}%
\mathrm{\operatorname{Re}}(u_{\alpha0}u_{0\bar{\alpha}})\phi^{2}e^{-\varphi}\\
= & -\int_{M}\mathrm{\operatorname{Re}}(u_{0\bar{\alpha}0}u_{\alpha})\phi
^{2}e^{-\varphi}-\int_{M}\mathrm{\operatorname{Re}}(u_{\bar{\alpha}0}%
u_{\alpha})(\phi^{2})_{0}e^{-\varphi}\\
= & -\int_{M}\mathrm{\operatorname{Re}}([u_{00\bar{\alpha}}+u_{\bar{\alpha}%
}]u_{\alpha})\phi^{2}e^{-\varphi}-\int_{M}\mathrm{\operatorname{Re}}%
(u_{\bar{\alpha}0}u_{\alpha})(\phi^{2})_{0}e^{-\varphi},
\end{array}
\label{13}%
\end{equation}
here we used $\varphi$ is basic, that is $\varphi_{0}=0.$ Also
\begin{equation}%
\begin{array}
[c]{ll}
& \int_{M}u_{0\bar{\alpha}}u_{\alpha0}\phi^{2}e^{-\varphi}\\
= & -\int_{M}\mathrm{\operatorname{Re}}(u_{\alpha0\bar{\alpha}})u_{0}\phi
^{2}e^{-\varphi}-\int_{M}\mathrm{\operatorname{Re}}(u_{\alpha0}[(\phi
^{2})_{\bar{\alpha}}-\varphi_{\bar{\alpha}}\phi^{2}])u_{0}e^{-\varphi}\\
= & -\int_{M}u_{\alpha\bar{\alpha}0}u_{0}\phi^{2}e^{-\varphi}-\int
_{M}\mathrm{\operatorname{Re}}(u_{\alpha0}[(\phi^{2})_{\bar{\alpha}}%
-\varphi_{\bar{\alpha}}\phi^{2}])u_{0}e^{-\varphi}.
\end{array}
\label{14}%
\end{equation}
The last term in (\ref{11}) becomes%
\begin{equation}%
\begin{array}
[c]{c}%
\int_{M}u_{00}u_{00}\phi^{2}e^{-\varphi}=-\int_{M}u_{000}u_{0}\phi
^{2}e^{-\varphi}-\int_{M}u_{00}u_{0}(\phi^{2})_{0}e^{-\varphi}.
\end{array}
\label{15}%
\end{equation}
By combining (\ref{12}), (\ref{13}), (\ref{14}) and (\ref{15}) with
(\ref{11}), we obtain%
\begin{equation}%
\begin{array}
[c]{ll}
& \int_{M}\left[  |u_{\alpha\bar{\beta}}|^{2}+|u_{0\alpha}|^{2}+|u_{0\bar
{\alpha}}|^{2}+|u_{00}|^{2}\right]  \phi^{2}e^{-\varphi}\\
\leq & -\int_{M}\left[  \mathrm{\operatorname{Re}}((\Delta u)_{\bar{\beta}%
}u_{\beta}+(\Delta u)_{0}u_{0})\right]  \phi^{2}e^{-\varphi}+\int
_{M}\operatorname{Re}(u_{\alpha\bar{\beta}}u_{\beta}\varphi_{\bar{\alpha}%
})\phi^{2}e^{-\varphi}\\
& +\int_{M}\operatorname{Re}(u_{\alpha0}u_{0}\varphi_{\bar{\alpha}})\phi
^{2}e^{-\varphi}-\int_{M}[\mathrm{\operatorname{Re}}(u_{\bar{\alpha}%
0}u_{\alpha})+u_{00}u_{0}](\phi^{2})_{0}e^{-\varphi}\\
& -\int_{M}\operatorname{Re}((u_{\alpha\bar{\beta}}u_{\beta}+u_{\alpha0}%
u_{0})(\phi^{2})_{\bar{\alpha}})e^{-\varphi}.
\end{array}
\label{16}%
\end{equation}

First, we deal with the first term in (\ref{16}),
\[%
\begin{array}
[c]{ll}
& -\int_{M}\mathrm{\operatorname{Re}}((\Delta u)_{\bar{\beta}}u_{\beta
}+(\Delta u)_{0}u_{0})\phi^{2}e^{-\varphi}=-\int_{M}\left\langle \nabla\Delta
u,\nabla u\right\rangle \phi^{2}e^{-\varphi}\\
= & \int_{M}(\Delta u)(\Delta_{\varphi}u)\phi^{2}e^{-\varphi}+\int_{M}(\Delta
u)\left\langle \nabla u,\nabla\phi^{2}\right\rangle e^{-\varphi}\\
= & \int_{M}(\Delta u)\left\langle \nabla u,\nabla\phi^{2}\right\rangle
e^{-\varphi},
\end{array}
\]
thus by the Cauchy-Schwarz inequality
\begin{equation}%
\begin{array}
[c]{ll}
& -\int_{M}\mathrm{\operatorname{Re}}((\Delta u)_{\bar{\beta}}u_{\beta
}+(\Delta u)_{0}u_{0})\phi^{2}e^{-\varphi}\\
\leq & \frac{1}{8(2n+1)}\int_{M}(\Delta u)^{2}\phi^{2}e^{-\varphi}%
+8(2n+1)\int_{M}|\nabla u|^{2}|\nabla\phi|^{2}e^{-\varphi}\\
\leq & \frac{1}{4}\int_{M}[|u_{\alpha\bar{\beta}}|^{2}+|u_{00}|^{2}]\phi
^{2}e^{-\varphi}+8(2n+1)\int_{M}|\nabla u|^{2}|\nabla\phi|^{2}e^{-\varphi}.
\end{array}
\label{17}%
\end{equation}
Secondly integration by parts yields
\[%
\begin{array}
[c]{ll}
& \int_{M}\operatorname{Re}(u_{\alpha\bar{\beta}}u_{\beta}\varphi_{\bar
{\alpha}})\phi^{2}e^{-\varphi}\\
= & -\int_{M}\operatorname{Re}(u_{\alpha}u_{\beta\overline{\beta}}%
\varphi_{\bar{\alpha}})\phi^{2}e^{-\varphi}-\int_{M}\operatorname{Re}%
(u_{\alpha}u_{\beta}\varphi_{\bar{\alpha}\overline{\beta}})\phi^{2}%
e^{-\varphi}\\
& +\int_{M}\operatorname{Re}(u_{\alpha}u_{\beta}\varphi_{\bar{\alpha}}%
\varphi_{\overline{\beta}})\phi^{2}e^{-\varphi}-\int_{M}\operatorname{Re}%
(u_{\alpha}u_{\beta}\varphi_{\bar{\alpha}}(\phi^{2})_{\overline{\beta}%
})e^{-\varphi},
\end{array}
\]
where we have used the assumption $\varphi_{\alpha\beta}=\varphi_{\bar{\alpha
}\overline{\beta}}=0.$ Thus%
\begin{equation}%
\begin{array}
[c]{ll}
& \int_{M}\operatorname{Re}(u_{\alpha\bar{\beta}}u_{\beta}\varphi_{\bar
{\alpha}})\phi^{2}e^{-\varphi}\\
= & -\int_{M}(\Delta u)\operatorname{Re}(u_{\alpha}\varphi_{\bar{\alpha}}%
)\phi^{2}e^{-\varphi}+\int_{M}\operatorname{Re}(u_{\alpha}\varphi_{\bar
{\alpha}})u_{00}\phi^{2}e^{-\varphi}\\
& +\int_{M}\operatorname{Re}(u_{\alpha}u_{\beta}\varphi_{\bar{\alpha}}%
\varphi_{\overline{\beta}})\phi^{2}e^{-\varphi}-\int_{M}\operatorname{Re}%
(u_{\alpha}u_{\beta}\varphi_{\bar{\alpha}}(\phi^{2})_{\overline{\beta}%
})e^{-\varphi}\\
= & -\int_{M}\langle\nabla u,\nabla\varphi\rangle^{2}\phi^{2}e^{-\varphi}%
+\int_{M}\operatorname{Re}(u_{\alpha}\varphi_{\bar{\alpha}})u_{00}\phi
^{2}e^{-\varphi}\\
& +\int_{M}\operatorname{Re}(u_{\alpha}u_{\beta}\varphi_{\bar{\alpha}}%
\varphi_{\overline{\beta}})\phi^{2}e^{-\varphi}-\int_{M}\operatorname{Re}%
(u_{\alpha}u_{\beta}\varphi_{\bar{\alpha}}(\phi^{2})_{\overline{\beta}%
})e^{-\varphi}.
\end{array}
\label{18}%
\end{equation}
Note that%
\[%
\begin{array}
[c]{c}%
\operatorname{Re}(u_{\alpha}u_{\beta}\varphi_{\bar{\alpha}}\varphi
_{\overline{\beta}})\leq\langle\nabla u,\nabla\varphi\rangle^{2}.
\end{array}
\]
From the first term in line 3 in (\ref{16}), we have
\[%
\begin{array}
[c]{c}%
\int_{M}\operatorname{Re}(u_{\alpha0}u_{0}\varphi_{\bar{\alpha}})\phi
^{2}e^{-\varphi}=-\int_{M}\operatorname{Re}(u_{\alpha}\varphi_{\bar{\alpha}%
})u_{00}\phi^{2}e^{-\varphi}-\int_{M}\mathrm{\operatorname{Re}}(u_{\alpha
}\varphi_{\bar{\alpha}})u_{0}(\phi^{2})_{0}e^{-\varphi},
\end{array}
\]
since $\varphi$ is basic, so $\varphi_{\bar{\alpha}0}=\varphi_{0\bar{\alpha}%
}=0.$ Plugging these into (\ref{18}), we conclude%
\begin{equation}%
\begin{array}
[c]{c}%
\int_{M}\operatorname{Re}(u_{\alpha\bar{\beta}}u_{\beta}\varphi_{\bar{\alpha}%
}+u_{\alpha0}u_{0}\varphi_{\bar{\alpha}})\phi^{2}e^{-\varphi}\leq4\int
_{M}|\nabla u|^{2}|\nabla\varphi|\phi|\nabla\phi|e^{-\varphi}.
\end{array}
\label{19}%
\end{equation}
For the last two terms in (\ref{16}). By the Cauchy-Schwarz inequality again%
\begin{equation}%
\begin{array}
[c]{ll}
& -\int_{M}[\mathrm{\operatorname{Re}}(u_{\bar{\alpha}0}u_{\alpha}%
)+u_{00}u_{0}](\phi^{2})_{0}+\operatorname{Re}((u_{\alpha\bar{\beta}}u_{\beta
}+u_{\alpha0}u_{0})(\phi^{2})_{\bar{\alpha}})]e^{-\varphi}\\
\leq & \frac{1}{4}\int_{M}[|u_{\alpha\bar{\beta}}|^{2}+2|u_{\alpha0}%
|^{2}+2|u_{\bar{\alpha}0}|^{2}+|u_{00}|]\phi^{2}e^{-\varphi}\\
& +c_{1}(n)\int_{M}|\nabla u|^{2}|\nabla\phi|^{2}e^{-\varphi}+c_{2}(n)\int
_{M}|\nabla\varphi||\nabla u|^{2}\phi|\nabla\phi|e^{-\varphi}.
\end{array}
\label{19a}%
\end{equation}
Therefore, by substituting (\ref{17}), (\ref{19}) and (\ref{19a}) into
(\ref{16}), we final get
\begin{equation}%
\begin{array}
[c]{ll}
& \int_{M}\left[  |u_{\alpha\bar{\beta}}|^{2}+|u_{0\alpha}|^{2}+|u_{0\bar
{\alpha}}|^{2}+|u_{00}|^{2}\right]  \phi^{2}e^{-\varphi}\\
\leq & c_{3}(n)\int_{M}|\nabla u|^{2}|\nabla\phi|^{2}e^{-\varphi}+c_{4}%
(n)\int_{M}|\nabla\varphi||\nabla u|^{2}\phi|\nabla\phi|e^{-\varphi}.
\end{array}
\label{19b}%
\end{equation}
Since $\int_{M}|\nabla u|^{2}e^{-\varphi}<\infty$, it implies that%
\[%
\begin{array}
[c]{c}%
\int_{M}|\nabla u|^{2}|\nabla\phi|^{2}e^{-\varphi}\rightarrow0\text{
}\mathrm{as}\text{ }R\rightarrow\infty.
\end{array}
\]
Furthermore, by the assumption that $|\nabla\varphi|\leq C(d(x_{0},x)+1)$, we
have $|\nabla\varphi||\nabla\phi|\leq C$ on $M$. It yields that
\[%
\begin{array}
[c]{c}%
\int_{M}|\nabla\varphi||\nabla u|^{2}\phi|\nabla\phi|e^{-\varphi}%
\rightarrow0\text{ }\mathrm{as}\text{ }R\rightarrow\infty.
\end{array}
\]
We conclude that $u_{\alpha\bar{\beta}}=0,$ $u_{\alpha0}=0$ and $u_{00}=0$ by
letting $R\rightarrow\infty$ in (\ref{19b}).

To show $u$ is constant when $\varphi$ is proper. Note that $\langle\nabla
u,\nabla\varphi\rangle=0$ as $u$ is both $\varphi$-harmonic and harmonic. Let
\[%
\begin{array}
[c]{c}%
D(t)=\{x:\varphi(x)\leq t\},
\end{array}
\]
whish is compact as $\varphi$ is proper. Now we compute
\[%
\begin{array}
[c]{c}%
\int_{D(t)}|\nabla u|^{2}=\frac{1}{2}\int_{D(t)}\Delta u^{2}=\frac{1}{2}%
\int_{\partial D(t)}\frac{\partial u^{2}}{\partial\nu}=\int_{\partial
D(t)}u\frac{\langle\nabla u,\nabla\varphi\rangle}{|\nabla\varphi|}=0.
\end{array}
\]
Since this is true for any $t$, it follows that $|\nabla u|=0$ on $M$, so $u$
is constant.
\end{proof}

\section{Ends of shrinking Sasaki-Ricci solitons}

In this section, we assume $(M,g^{T},\psi,X)$ is a gradient shrinking
Sasaki-Ricci soliton. For a Sasaki-Ricci soliton, the first author, Li and Lin
\cite{cll} showed that the potential function $\psi$ satisfies
\begin{equation}%
\begin{array}
[c]{c}%
n(r(x)-7)_{+}^{2}\leq\psi(x)\leq n(r(x)+\sqrt{3})^{2}%
\end{array}
\label{21}%
\end{equation}
for all $r(x):=d(p,x)$, here $p$ is a minimum point of $\psi$. Using the
Bianchi identities,
\begin{equation}%
\begin{array}
[c]{c}%
R+|\nabla\psi|^{2}=(4n-2)\psi,
\end{array}
\label{21a}%
\end{equation}
where $R$ denotes the scalar curvature of $(M,g)$. Since $R\geq0$ according to
a result showed by Chang-Li-Lin \cite{cll}, we have
\begin{equation}%
\begin{array}
[c]{c}%
|\nabla\psi|^{2}\leq(4n-2)\psi.
\end{array}
\label{22}%
\end{equation}

We now prove the following theorem follows the method adopted in \cite{mw1}.

\begin{theorem}
Let $(M,g^{T},\psi,X)$ be a gradient shrinking Sasaki-Ricci soliton, then
$(M,g)$ has only one end.
\end{theorem}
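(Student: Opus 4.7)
The plan is to argue by contradiction. Suppose $M$ has at least two ends. The goal is to construct a nonconstant $\psi$-harmonic function with finite weighted Dirichlet energy and then apply Theorem \ref{Thm2} (with $\varphi = \psi$) to force it to be constant, a contradiction. This is the Sasakian analogue of the Munteanu--Wang \cite{mw1} argument, but the transverse Kähler-plus-Reeb decomposition will be used in verifying the hypotheses of Theorem \ref{Thm2}.

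First I would verify that $\psi$ itself qualifies as a weight function for Theorem \ref{Thm2}. Since $\psi$ is the Hamiltonian potential of a holomorphic Reeb-commuting vector field, it is basic, and differentiating the relation $\iota_X \omega^T = \sqrt{-1}\,\overline{\partial}_B \psi$ together with the fact that $X$ is transversely holomorphic gives the required vanishing $\psi_{\alpha\beta}=0$ of the $(2,0)$ part of the transverse complex Hessian. The growth bound $|\nabla\psi| \le C(r(x)+1)$ follows immediately from equations (\ref{21a})--(\ref{22}) combined with the upper estimate $\psi \le n(r+\sqrt{3})^2$ of (\ref{21}), since $R \ge 0$ gives $|\nabla\psi|^2 \le (4n-2)\psi$. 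Finally, the lower bound $\psi(x) \ge n(r(x)-7)_+^2$ in (\ref{21}) shows $\psi$ is proper, so Theorem \ref{Thm2} will yield that any candidate $u$ must be \emph{constant}.

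Next I would produce the candidate $u$. Following Li--Tam theory adapted to the drift Laplacian $\Delta_\psi = \Delta - \langle \nabla\psi,\nabla\cdot\rangle$ on the weighted manifold $(M, g, e^{-\psi}\,dv)$, if $M$ has at least two nonparabolic ends (in the $\psi$-weighted sense), one obtains a bounded $\Delta_\psi$-harmonic function $u$ taking distinct limits at two different ends; the standard energy estimate in the Li--Tam construction moreover gives $\int_M |\nabla u|^2 e^{-\psi} < \infty$. Applying Theorem \ref{Thm2} then forces $u$ to be constant, contradicting the separation of ends. To complete the argument one must rule out the possibility of parabolic ends; here one exploits the fact that for gradient shrinking Ricci solitons the weighted volume $\int_M e^{-\psi}\,dv$ is finite (by the quadratic growth of $\psi$ and a Cao--Zhou type volume comparison), which together with a capacity argument shows that every end is nonparabolic with respect to $e^{-\psi}\,dv$, so the Li--Tam construction indeed applies whenever there is more than one end.

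The main obstacle will be the verification of the weighted nonparabolicity of each end and the corresponding estimate $\int |\nabla u|^2 e^{-\psi} < \infty$ for the Li--Tam barrier function in the Sasaki setting, since the capacity/Green's function estimates must be carried out on $(M,g)$ rather than on the (generally singular) transverse Kähler quotient. Once these weighted potential-theoretic estimates are in place, the rest of the proof is a clean application of Theorem \ref{Thm2}, and the properness of $\psi$ from (\ref{21}) closes the contradiction.
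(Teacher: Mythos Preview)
Your overall architecture---verify the hypotheses of Theorem~\ref{Thm2} for the potential, then invoke a Li--Tam construction to manufacture a nonconstant weighted-harmonic function with finite weighted Dirichlet energy---is the same as the paper's. The gap is in the choice of weight and in your treatment of parabolic ends.

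You take $\varphi=\psi$, so the relevant measure is $e^{-\psi}\,dv$. By (\ref{21}) the potential grows quadratically, and a Cao--Zhou type volume bound gives at most polynomial Riemannian volume growth, so indeed $\int_M e^{-\psi}\,dv<\infty$. But finite weighted volume forces every end to be $\psi$-\emph{parabolic}, not nonparabolic: the cutoff $\phi_R$ equal to $1$ on $B_R$ and linear down to $0$ on $B_{2R}\setminus B_R$ has weighted Dirichlet energy at most $R^{-2}\int_{B_{2R}\setminus B_R} e^{-\psi}\,dv\to 0$, so every compact set has zero weighted capacity. Your claim that ``finite weighted volume \ldots shows that every end is nonparabolic with respect to $e^{-\psi}\,dv$'' is therefore exactly backwards, and with all ends $\psi$-parabolic the Li--Tam machinery produces no nonconstant $\Delta_\psi$-harmonic function of finite weighted energy from the assumption of two ends.

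The paper circumvents this by reversing the sign: it works with $\varphi=-a\psi$ for $a>0$, so the density $e^{-\varphi}=e^{a\psi}$ blows up and ends have a chance to be $\varphi$-nonparabolic. The price is that excluding $\varphi$-parabolic ends now requires substantial analysis: one assumes such an end exists, takes its Evans (Nakai) potential $h$, shows $\int_E|\nabla\ln h|^2 e^{a\psi}<\infty$ via the co-area formula, and then runs a Bochner computation on $|\nabla\ln h|^2$ together with De~Giorgi--Nash--Moser iteration (using the soliton identities (\ref{21a})--(\ref{22}) and a local Sobolev inequality) to upgrade this to a pointwise decay $|\nabla\ln h|(x)\le c\,e^{-\frac{na}{2}(r(x)-7)^2}$, contradicting the properness of $h$. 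Only after this step does the Li--Tam/Theorem~\ref{Thm2} argument close. In short, the missing ingredients in your proposal are the sign flip $\varphi=-a\psi$ and the Bochner--Moser argument that rules out parabolic ends for that weight; no quick capacity remark rescues the choice $\varphi=\psi$.
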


\begin{proof}
Let $(M,g,e^{-\varphi}dv)$ be a smooth metric space with the weight function
\[
\varphi=-a\psi,
\]
where $a>0$ is a constant. An end $E$ of $(M,g,e^{-\varphi}dv)$ is said to be
$\varphi$-nonparabolic if there exists a positive Green's function for the
weighted Laplacian%
\[
\Delta_{\varphi}u=\Delta u-\left\langle \nabla u,\nabla\varphi\right\rangle
\]
satisfying the Neumann boundary conditions on $\partial E$. Otherwise, it is
called $\varphi$-parabolic.

We first show that $M$ does not admit any $\varphi$-parabolic ends. Suppose
$E$ is a $\varphi$-parabolic end of $M$. A characterize a $\varphi$-parabolic
end, due to Nakai \cite{na}, is by the existence of a proper $\varphi
$-harmonic function $h$ on the end. So we have a function $h\geq1$ defined on
$E$ satisfies
\begin{equation}%
\begin{array}
[c]{c}%
\Delta_{\varphi}h=\Delta h+a\left\langle \nabla h,\nabla\psi\right\rangle =0
\end{array}
\label{23a}%
\end{equation}
such that%
\begin{equation}%
\begin{array}
[c]{c}%
\lim_{x\rightarrow E(\infty)}h(x)=\infty\text{ }\mathrm{and}\text{ }h=1\text{
}\mathrm{on}\text{ }\partial E.
\end{array}
\label{23}%
\end{equation}
The goal is to show that (\ref{23}) leads to a contradiction, which implies
that all ends of $(M,g)$ are $\varphi$-nonparabolic.

For $t>1$ and $b>c>1$, we denote
\[%
\begin{array}
[c]{l}%
l(t):=\{x\in E:h(x)=t\}\\
L(c,b):=\{x\in E:c<h(x)<b\},
\end{array}
\]
which are compact since $h$ is proper. By the Stokes Theorem, one get
\[%
\begin{array}
[c]{c}%
0=\int_{L(c,b)}(\Delta_{\varphi}h)e^{-\varphi}=\int_{l(b)}\frac{\partial
h}{\partial\nu}e^{-\varphi}-\int_{l(c)}\frac{\partial h}{\partial\nu
}e^{-\varphi}=\int_{l(b)}|\nabla h|e^{-\varphi}-\int_{l(c)}|\nabla
h|e^{-\varphi},
\end{array}
\]
where we used $\frac{\partial}{\partial\nu}=\frac{\nabla h}{|\nabla h|}$ is
the unit normal to the level set of $h$. This implies that $\int_{l(t)}|\nabla
h|e^{-\varphi}$ is independent of $t\geq1$. Then by apply the co-area formula
to get
\[%
\begin{array}
[c]{c}%
\int_{E}|\nabla\ln h|^{2}e^{-\varphi}=\int_{L(1,\infty)}|\nabla\ln
h|^{2}e^{-\varphi}=\int_{1}^{\infty}\int_{l(t)}\frac{|\nabla h|}{h^{2}%
}e^{-\varphi}dt=\int_{1}^{\infty}\frac{1}{t^{2}}dt\int_{l(t_{0})}|\nabla
h|e^{-\varphi}%
\end{array}
\]
which is finite. This implies that $\int_{E}|\nabla\ln h|^{2}e^{-\varphi
}<\infty$ and for any $x\in E$ with $B_{x}(1)\subset E$, we have by (\ref{21})%
\begin{equation}%
\begin{array}
[c]{c}%
\int_{B_{x}(1)}|\nabla\ln h|^{2}\leq ce^{-na(r(x)-7)^{2}}.
\end{array}
\label{24}%
\end{equation}
Now we transform (\ref{24}) into a pointwise estimate. We denote $v:=\ln h$
and by the Bochner formula%
\[%
\begin{array}
[c]{c}%
\frac{1}{2}\Delta|\nabla v|^{2}=|\mathrm{Hess}(v)|^{2}+\left\langle
\nabla\Delta v,\nabla v\right\rangle +Ric(\nabla v,\nabla v).
\end{array}
\]
For any vector field $V$,%
\[%
\begin{array}
[c]{lll}%
Ric(V,V) & = & Ric(V_{D}+\eta(V)\xi,V_{D}+\eta(V)\xi)\\
& = & Ric^{T}(V_{D},V_{D})-2g(V_{D},V_{D})+2n\eta(V)^{2}\\
& = & (2n+2)|V_{D}|^{2}-\mathrm{Hess}_{D}(\psi)(V_{D},V_{D})-2g(V_{D}%
,V_{D})+2n\eta(V)^{2}\\
& = & 2n|V|^{2}-\mathrm{Hess}(\psi)(V,V).
\end{array}
\]
Since by (\ref{23a}),%
\begin{equation}%
\begin{array}
[c]{c}%
\Delta v=\frac{1}{h}\Delta h-\frac{1}{h^{2}}|\nabla h|^{2}=-a\left\langle
\nabla v,\nabla\psi\right\rangle -|\nabla v|^{2},
\end{array}
\label{25}%
\end{equation}
we conclude that%
\[%
\begin{array}
[c]{lll}%
\frac{1}{2}\Delta|\nabla v|^{2} & = & |\mathrm{Hess}(v)|^{2}-a\left\langle
\nabla\left\langle \nabla v,\nabla\psi\right\rangle ,\nabla v\right\rangle
-\left\langle \nabla|\nabla v|^{2},\nabla v\right\rangle \\
&  & +2n|\nabla v|^{2}-\mathrm{Hess}(\psi)(\nabla v,\nabla v)\\
& \geq & \frac{1}{2}|\mathrm{Hess}(v)|^{2}-(4n-2)a^{2}\psi|\nabla
v|^{2}-\left\langle \nabla|\nabla v|^{2},\nabla v\right\rangle \\
&  & +2n|\nabla v|^{2}-(a+1)\mathrm{Hess}(\psi)(\nabla v,\nabla v).
\end{array}
\]
By the Cauchy-Schwarz inequality and (\ref{25}), we obtain%
\[%
\begin{array}
[c]{lll}%
|\mathrm{Hess}(v)|^{2} & \geq & \frac{1}{2n+1}(\Delta v)^{2}=\frac{1}%
{2n+1}\left(  a\left\langle \nabla v,\nabla\psi\right\rangle +|\nabla
v|^{2}\right)  ^{2}\\
& \geq & \frac{1}{2(2n+1)}|\nabla v|^{4}-4a^{2}\psi|\nabla v|^{2}.
\end{array}
\]
In conclusion, denote $\sigma:=|\nabla v|^{2},$ we have the following
inequality%
\begin{equation}%
\begin{array}
[c]{c}%
\frac{1}{4(2n+1)}\sigma^{2}\leq(4n+2)a^{2}\psi\sigma+\left\langle \nabla
\sigma,\nabla v\right\rangle +(a+1)\mathrm{Hess}(\psi)(\nabla v,\nabla
v)+\frac{1}{2}\Delta\sigma.
\end{array}
\label{26}%
\end{equation}
Multiplying the above inequality by $\phi^{2}\sigma^{p-1}$ and integrating
over $M$, where $\phi$ is a cut-off function with support in $B_{x}(1)$ and
$p\geq c(n)>0$ be large enough depending only on dimension $2n+1$, we get%
\begin{equation}%
\begin{array}
[c]{ll}
& \frac{1}{4(2n+1)}\int_{M}\sigma^{p+1}\phi^{2}\\
\leq & (4n+2)a^{2}\int_{M}\psi\sigma^{p}\phi^{2}+\int_{M}\left\langle
\nabla\sigma,\nabla v\right\rangle \sigma^{p-1}\phi^{2}\\
& +(a+1)\int_{M}\mathrm{Hess}(\psi)(\nabla v,\nabla v)\sigma^{p-1}\phi
^{2}+\frac{1}{2}\int_{M}\sigma^{p-1}(\Delta\sigma)\phi^{2}.
\end{array}
\label{27}%
\end{equation}
Integrating by parts, we get%
\begin{equation}%
\begin{array}
[c]{ll}
& p\int_{M}\left\langle \nabla\sigma,\nabla v\right\rangle \sigma^{p-1}%
\phi^{2}\\
= & \int_{M}\left\langle \nabla\sigma^{p},\nabla v\right\rangle \phi^{2}%
=-\int_{M}\sigma^{p}\Delta v\phi^{2}-\int_{M}\sigma^{p}\left\langle \nabla
v,\nabla\phi^{2}\right\rangle \\
= & \int_{M}\sigma^{p}(a\left\langle \nabla v,\nabla\psi\right\rangle
+\sigma)\phi^{2}-\int_{M}\sigma^{p}\left\langle \nabla v,\nabla\phi
^{2}\right\rangle \\
\leq & 2\int_{M}\sigma^{p+1}\phi^{2}+2\int_{M}\sigma^{p}|\nabla\phi
|^{2}+(2n-1)a^{2}\int_{M}\psi\sigma^{p}\phi^{2}.
\end{array}
\label{28}%
\end{equation}
Also,
\begin{equation}%
\begin{array}
[c]{lll}%
\int_{M}\sigma^{p-1}(\Delta\sigma)\phi^{2} & = & -(p-1)\int_{M}\sigma
^{p-2}|\nabla\sigma|^{2}\phi^{2}-\int_{M}\sigma^{p-1}\left\langle \nabla
\sigma,\nabla\phi^{2}\right\rangle \\
& \leq & -(p-2)\int_{M}\sigma^{p-2}|\nabla\sigma|^{2}\phi^{2}+\int_{M}%
\sigma^{p-1}|\nabla\phi|^{2}.
\end{array}
\label{29}%
\end{equation}
Integrating by parts again,
\begin{equation}%
\begin{array}
[c]{ll}
& \int_{M}\mathrm{Hess}(\psi)(\nabla v,\nabla v)\sigma^{p-1}\phi^{2}\\
= & -\int_{M}\mathrm{Hess}(v)(\nabla v,\nabla\psi)\sigma^{p-1}\phi^{2}%
-\int_{M}\sigma^{p-1}\left\langle \nabla v,\nabla\psi\right\rangle
\left\langle \nabla v,\nabla\phi^{2}\right\rangle \\
& -(p-1)\int_{M}\left\langle \nabla v,\nabla\psi\right\rangle \left\langle
\nabla v,\nabla\sigma\right\rangle \sigma^{p-2}\phi^{2}-\int_{M}\Delta
v\left\langle \nabla v,\nabla\psi\right\rangle \sigma^{p-1}\phi^{2}.
\end{array}
\label{30}%
\end{equation}
We now study each term on the right hand side of the above equation
(\ref{30}).
\[%
\begin{array}
[c]{lll}
&  & -\int_{M}\mathrm{Hess}(v)(\nabla v,\nabla\psi)\sigma^{p-1}\phi^{2}\\
& = & -\frac{1}{2}\int_{M}\left\langle \nabla|\nabla v|^{2},\nabla
\psi\right\rangle \sigma^{p-1}\phi^{2}=-\frac{1}{2p}\int_{M}\left\langle
\nabla\sigma^{p},\nabla\psi\right\rangle \sigma^{p-1}\phi^{2}\\
& = & \frac{1}{2p}\int_{M}\Delta\psi\sigma^{p}\phi^{2}+\frac{1}{2p}\int
_{M}\left\langle \nabla\phi^{2},\nabla\psi\right\rangle \sigma^{p}\\
& \leq & \frac{4n-1}{p}\int_{M}\psi\sigma^{p}\phi^{2}+\frac{1}{2p}\int
_{M}\sigma^{p}|\nabla\phi|^{2},
\end{array}
\]
here we used $\Delta\psi=2n(2n+1)-R\leq2n(2n+1)\leq4n\psi$ and (\ref{22}).
And,%
\[%
\begin{array}
[c]{lll}%
-\int_{M}\sigma^{p-1}\left\langle \nabla v,\nabla\psi\right\rangle
\left\langle \nabla v,\nabla\phi^{2}\right\rangle  & \leq & 2\int_{M}%
|\nabla\psi||\nabla\phi|\sigma^{p}\phi\\
& \leq & (4n-2)\int_{M}\psi\sigma^{p}\phi^{2}+\int_{M}\sigma^{p}|\nabla
\phi|^{2}.
\end{array}
\]
The third term in the right hand side of (\ref{30}) is bounded by%
\[%
\begin{array}
[c]{ll}
& -(p-1)\int_{M}\left\langle \nabla v,\nabla\psi\right\rangle \left\langle
\nabla v,\nabla\sigma\right\rangle \sigma^{p-2}\phi^{2}\leq p\int_{M}%
|\nabla\psi||\nabla\sigma|\sigma^{p-1}\phi\\
\leq & \frac{p-2}{4(a+1)}\int_{M}|\nabla\sigma|^{2}\sigma^{p-2}\phi
^{2}+2(4n-2)p(a+1)\int_{M}\psi\sigma^{p}\phi^{2}.
\end{array}
\]
The last term in (\ref{30}) is%
\[%
\begin{array}
[c]{lll}%
-\int_{M}\Delta v\left\langle \nabla v,\nabla\psi\right\rangle \sigma
^{p-1}\phi^{2} & = & \int_{M}(a\left\langle \nabla v,\nabla\psi\right\rangle
-|\nabla v|^{2})\left\langle \nabla v,\nabla\psi\right\rangle \sigma^{p-1}%
\phi^{2}\\
& = & n(4n-2)p(a+1)\int_{M}\psi\sigma^{p}\phi^{2}+\frac{1}{np(a+1)}\int
_{M}\sigma^{p+1}\phi^{2}.
\end{array}
\]
Plugging all these estimates into (\ref{30}), we have%
\begin{equation}%
\begin{array}
[c]{ll}
& (a+1)\int_{M}\mathrm{Hess}(\psi)(\nabla v,\nabla v)\sigma^{p-1}\phi^{2}\\
= & c_{1}(n)(a+1)^{2}p\int_{M}\psi\sigma^{p}\phi^{2}+\frac{1}{np}\int
_{M}\sigma^{p+1}\phi^{2}\\
& +\frac{p-2}{4}\int_{M}|\nabla\sigma|^{2}\sigma^{p-2}\phi^{2}+2(a+1)\int
_{M}\sigma^{p}|\nabla\phi|^{2}.
\end{array}
\label{31}%
\end{equation}
Using (\ref{28}), (\ref{29}) and (\ref{31}), we get from (\ref{27}) that for
$p$ large enough and depending on $2n+1,$
\begin{equation}%
\begin{array}
[c]{ll}
& (p-2)\int_{M}|\nabla\sigma|^{2}\sigma^{p-2}\phi^{2}\\
\leq & c_{2}(n)(a+1)^{2}p\int_{M}\psi\sigma^{p}\phi^{2}+c_{3}(n)(a+1)\int
_{M}\sigma^{p}|\nabla\phi|^{2}.
\end{array}
\label{32}%
\end{equation}
However, using (\ref{21}), we have a Sobolev inequality (equation (2.14) in
\cite{mw1})
\begin{equation}%
\begin{array}
[c]{c}%
\left(  \int_{B_{x}(1)}\chi^{2\mu}\right)  ^{\frac{1}{\mu}}\leq Ce^{c(n)r(x)}%
\left(  \int_{B_{x}(1)}|\nabla\chi|^{2}+C\int_{B_{x}(1)}\chi^{2}\right)  .
\end{array}
\label{33}%
\end{equation}
for some constant $\mu>1$ and for all smooth function $\chi$ with support in
$B_{x}(1)$, where $r(x):=d(p,x).$ Using (\ref{32}) and (\ref{33}), by the
standard De Giorgi-Nash-Moser iteration, we can obtain
\[%
\begin{array}
[c]{c}%
\sigma(x)\leq C(a)e^{c(n)r(x)}\int_{B_{x}(1)}\sigma.
\end{array}
\]
Then by combining this inequality with (\ref{24}), one concludes%
\[%
\begin{array}
[c]{c}%
|\nabla\mathrm{\ln}h(x)|\leq\sqrt{\sigma(x)}\leq ce^{-\frac{n}{2}%
a(r(x)-7)^{2}}.
\end{array}
\]
By integrating the above estimate for $h$ along minimizing geodesics we see
that $h$ must be bounded on the end $E$, which contradicts with (\ref{23}).
This proves all ends of $(M,g)$ are $\varphi$-nonparabolic.

To finish the proof of the Theorem, we assume to the contrary that $(M,g)$ has
more than one $\varphi$-nonparabolic end. Li-Tam \cite{lt} showed that there
exists a nonconstant $\varphi$-harmonic function $u:M\rightarrow\mathbb{R}$
such that
\[%
\begin{array}
[c]{c}%
\Delta_{\varphi}u=0\text{ \textrm{and} }\int_{M}|\nabla u|^{2}e^{-\varphi
}<\infty
\end{array}
\]
But $M$ as a gradient shrinking Sasaki-Ricci soliton satisfies all the
assumptions of Theorem \ref{Thm2} with $\varphi=-a\psi$. Indeed, the
definition of $\psi$ in (\ref{pr1}) implies that $\varphi$ is basic and
$\varphi_{\alpha\beta}=0$. Also (\ref{21}) and (\ref{22}) shows $\varphi$ is
proper and $|\nabla\varphi|$ grows at most linearly in the distance function.
Therefore, we conclude that $u$ must be a constant. This contradiction shows
that $M$ has only one end.
\end{proof}

\section{Positively curved shrinking Sasaki-Ricci solitons}

In this section, we show that a gradient shrinking Sasaki-Ricci soliton
$(M,g^{T},\psi,X)$ with the positive sectional curvature or positive
transverse holomorphic bisectional curvature must be compact. The proof
follows the method adopted in \cite{mw2}. We define%
\[%
\begin{array}
[c]{c}%
D(r):=\{x\in M:\rho(x)=2\sqrt{\psi(x)}<r\}.
\end{array}
\]
Then we have the following estimate that the average of the scalar curvature
over $D(r)$ is bounded by $2n(2n+1).$

\begin{lemma}
For all $r>0$, we have%
\begin{equation}%
\begin{array}
[c]{c}%
\int_{D(r)}R\leq2n(2n+1)\mathrm{Vol}(D(r)).
\end{array}
\label{40}%
\end{equation}

\end{lemma}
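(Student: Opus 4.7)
The plan is to rewrite (\ref{40}) as a boundary integral via the soliton identity. Taking the trace of (\ref{20}) with respect to $g$ yields
\[
\Delta\psi=2n(2n+1)-R,
\]
so (\ref{40}) is equivalent to showing $\int_{D(r)}\Delta\psi\ge 0$ for every $r>0$.

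The next step is to apply the divergence theorem to $\nabla\psi$ on the sublevel set $D(r)=\{\psi<r^{2}/4\}$. The estimate (\ref{21}) implies $\psi$ is proper on $M$, so each $D(r)$ is relatively compact; by Sard's theorem, $r^{2}/4$ is a regular value of $\psi$ for almost every $r>0$. For such $r$, the boundary $\partial D(r)$ is a smooth hypersurface whose outward unit normal equals $\nu=\nabla\psi/|\nabla\psi|$, so
\[
\int_{D(r)}\Delta\psi=\int_{\partial D(r)}\langle\nabla\psi,\nu\rangle=\int_{\partial D(r)}|\nabla\psi|\ge 0.
\]
This gives (\ref{40}) at every regular value of $\psi$ of the form $r^{2}/4$.

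To extend the inequality to every $r>0$, I would invoke the continuity of $r\mapsto\int_{D(r)}R$ and $r\mapsto\mathrm{Vol}(D(r))$, which follows from the smoothness and properness of $\psi$, and approximate an arbitrary $r$ by a sequence of regular values. I foresee no substantive obstacle: the argument rests on the single trace identity together with the sign of $|\nabla\psi|$ on the level sets, and the only analytic subtlety is the standard Sard-plus-continuity step needed to handle possibly critical values of $\psi$.
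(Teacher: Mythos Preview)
Your proposal is correct and follows essentially the same route as the paper: trace the soliton equation to get $2n(2n+1)-R=\Delta\psi$ (the paper writes $\Delta_{B}\psi$, but these agree since $\psi$ is basic), then apply the divergence theorem on $D(r)$ and observe that the outward normal is $\nabla\psi/|\nabla\psi|$, giving a nonnegative boundary term. The only difference is that you add a Sard/continuity step to handle possibly singular level sets, which the paper's proof simply omits.
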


\begin{proof}
Taking the trace in (\ref{1}), we have%
\[
R+\Delta_{B}\psi=2n(2n+1).
\]
Thus the Stokes Theorem implies
\[%
\begin{array}
[c]{ll}
& 2n(2n+1)\mathrm{Vol}(D(r)-\int_{D(r)}R=\int_{D(r)}\Delta_{B}\psi\\
= & \int_{\partial D(r)}\langle\nabla\psi,\frac{\nabla\rho}{|\nabla\rho
|}\rangle=\int_{\partial D(r)}|\nabla\psi|\geq0,
\end{array}
\]
where we used $\frac{\nabla\rho}{|\nabla\rho|}=\frac{\nabla\psi}{|\nabla\psi
|}$ is the unit normal to $\partial D(r).$
\end{proof}

\begin{theorem}
\label{Thm}Let $(M,g^{T},\psi,X)$ be a gradient shrinking Sasaki-Ricci soliton
with the positive sectional curvature. Then $(M,g)$ must be compact.
\end{theorem}

\begin{proof}
Assume that $M$ is noncompact. Recall that the Ricci curvature $R_{ij}$ of
$(M,g)$ satisfies the following differential equation ((iv) in equation (6.1)
in \cite{cll})
\[
\Delta_{B,\psi}R_{ij}=4n(R_{ij}-g_{ij})-2R_{kl}R_{ikjl},
\]
where $\Delta_{B,\psi}:=\Delta_{B}-\left\langle \nabla\psi,\cdot\right\rangle
$ is the weighted basic Laplacian and $R_{ikjl}$ the curvature tensor
restricted to the horizontal distribution $\Gamma(D),$ here $D=\ker\eta$. Our
$R_{ikjl}$ is the opposite of the one taken in \cite{cll}. Note that
\begin{equation}
\Delta_{B,\psi}(R_{ij}-g_{ij})=(4n-2)(R_{ij}-g_{ij})-2(R_{kl}-g_{kl}%
)R_{ikjl},\label{41}%
\end{equation}
since $\Sigma_{k=1}^{2n}R_{ikjk}=R_{ij}-g_{ij}.$ We then consider the tensor
$Ric_{D}-g^{T}$ on $\Gamma(D)$. For $v\in\Gamma(D),$ one have%
\[%
\begin{array}
[c]{c}%
(R_{ij}-g_{ij})v_{i}v_{j}=R_{ikjk}v_{i}v_{j}=\mathrm{Rm}(v,e_{k}%
,v,e_{k})=\Sigma_{k=1}^{2n}K(v,e_{k})>0
\end{array}
\]
where $\mathrm{Rm}$ denotes the Riemann curvature tensor and $K(v,w)$ is the
sectional curvature of the plane spanned by $v$ and $w$. The inequality above
is true because we assumed $K(v,w)$ is positive for any $v$, $w$. Let
$\lambda(x)>0$ be the smallest eigenvalue of $Ric_{D}-g^{T}$ and suppose
$v\in\Gamma(D)$ is an eigenvector corresponding to $\lambda(x).$ We note that%
\[%
\begin{array}
[c]{c}%
(R_{kl}-g_{kl})R_{ikjl}v_{i}v_{j}=(R_{kl}-g_{kl})\mathrm{Rm}(v,e_{k},v,e_{l})
\end{array}
\]
Diagonalizing the Ricci tensor minus transverse K\"{a}hler metric so that
$R_{kl}-g_{kl}=\lambda_{k}\delta_{kl}$, it follows from above that%
\begin{equation}%
\begin{array}
[c]{c}%
(R_{kl}-g_{kl})R_{ikjl}v_{i}v_{j}=\Sigma_{k=1}^{2n}K(v,e_{k})\lambda_{k}\geq0.
\end{array}
\label{42}%
\end{equation}
From (\ref{41}) and (\ref{42}) it follows that $\lambda$ satisfies the
following differential inequality in the sense of barriers
\begin{equation}
\Delta_{B,\psi}\lambda\leq(4n-2)\lambda.\label{43}%
\end{equation}

We now adapt an idea from \cite{cly} to obtain a global lower bound for
$\lambda$ by using maximum principle. Since $\Delta_{B,\psi}\psi
=-2(2n-1)\psi+2n(2n+1).$ So
\[%
\begin{array}
[c]{lll}%
\Delta_{B,\psi}\frac{1}{\psi} & = & 2(2n-1)\frac{1}{\psi}-\frac{1}{\psi^{2}%
}[2n(2n+1)-2\frac{\left\vert \nabla\psi\right\vert ^{2}}{\psi}]\\
& \geq & 2(2n-1)\frac{1}{\psi}-2n(2n+1)\frac{1}{\psi^{2}}%
\end{array}
\]
and
\[%
\begin{array}
[c]{lll}%
\Delta_{B,\psi}\frac{1}{\psi^{2}} & = & 4(2n-1)\frac{1}{\psi^{2}}-\frac
{1}{\psi^{3}}[4n(2n+1)-6\frac{\left\vert \nabla\psi\right\vert ^{2}}{\psi}]\\
& \geq & 4(2n-1)\frac{1}{\psi^{2}}-4n(2n+1)\frac{1}{\psi^{3}}.
\end{array}
\]
Therefore, using these inequalities and combining with (\ref{43}), the
function%
\[%
\begin{array}
[c]{c}%
\varphi:=\lambda-\frac{b}{\psi}-\frac{nb}{\psi^{2}},
\end{array}
\]
for a small positive constant $b$, will satisfy the differential inequality
\begin{equation}%
\begin{array}
[c]{l}%
\Delta_{B,\psi}\varphi\leq2(2n-1)\varphi-\frac{2n(6n-5)b}{\psi^{2}}%
+\frac{16n^{2}(2n+1)b}{\psi^{3}}.
\end{array}
\label{44}%
\end{equation}
Take $b$ to be sufficiently small such that $\varphi$ is positive on
$B_{y}(10\sqrt{n}+5),$ where $y$ is a minimum point of $\psi$. Due to the
inequality (\ref{21}) for $\psi$, we may suppose that $\varphi$ attains the
global minimum at $p$ outside the ball $B_{y}(10\sqrt{n}+5)$. So if
$\varphi(p)<0$, then, at the minimum point $p$, we get
\[%
\begin{array}
[c]{l}%
\psi(p)\leq\frac{8n(2n+1)}{6n-5}\leq24n,
\end{array}
\]
by the inequality (\ref{44}). However, it contradicts with the lower bound
estimate of $\psi(p)$ i.e. $\psi(p)\geq25n,$ by the inequality (\ref{21})
again. Thus, we conclude that $\varphi\geq0$ on $M$. So there exists some
constant $0<b\leq1$ such that%
\begin{equation}%
\begin{array}
[c]{c}%
Ric_{D}-g^{T}\geq\frac{b}{\psi}%
\end{array}
\label{45}%
\end{equation}
on $M.$ We use this to show that the scalar curvature on $M$ must satisfy%
\begin{equation}%
\begin{array}
[c]{c}%
R\geq b\ln\psi.
\end{array}
\label{46}%
\end{equation}
Suppose by contradiction that there exists a point $x\in M$ such that%
\[%
\begin{array}
[c]{c}%
R(x)\leq b\ln\psi(x).
\end{array}
\]
Let us consider $\sigma(\eta)$, where $\eta\geq0$, to be the integral curve of
$-\frac{\nabla\psi}{|\nabla\psi|^{2}}$ , such that $\sigma(0)=x$. As
$|\nabla\psi|(x)>0$ by (\ref{21a}), this flow is defined at least in a
neighborhood of $x$. Since $\frac{d}{d\eta}\psi(\sigma(\eta))=-1$, it results
that $\psi(\sigma(\eta))=t-\eta$, where $t:=\psi(x)$. Using (\ref{45}), we
have that
\[%
\begin{array}
[c]{c}%
\frac{d}{d\eta}R(\sigma(\eta))=-\frac{\left\langle \nabla R,\nabla
\psi\right\rangle }{|\nabla\psi|^{2}}=-2(Ric_{D}-g^{T})(\frac{\nabla\psi
}{|\nabla\psi|},\frac{\nabla\psi}{|\nabla\psi|})\leq-\frac{2b}{\psi
(\sigma(\eta))}=-\frac{2b}{t-\eta}.
\end{array}
\]
where we have used the relation that $\nabla R=2(Ric_{D}-g^{T})(\nabla\psi).$
Integrating this differential inequality, we get
\begin{equation}%
\begin{array}
[c]{c}%
R(x)-R(\sigma(\eta))\geq2b\ln t-2b\ln(t-\eta).
\end{array}
\label{47}%
\end{equation}
Since initially $R(x)\leq b\ln t$, it follows that%
\[%
\begin{array}
[c]{c}%
R(\sigma(\eta))\leq2b\ln(t-\eta)-b\ln t<b\ln\psi(\sigma(\eta)).
\end{array}
\]
Since $R(y)\leq b\ln\psi(y)$ holds true along $\sigma(\eta)$ and $b\leq1$,
(\ref{21a}) implies that $|\nabla\psi|(\sigma(\eta))\geq\sqrt{4n-2}$ for all
$0\leq\eta\leq t-1$. Hence, $\sigma(\eta)$ exists at least for $0\leq\eta\leq
t-1$, and
\[%
\begin{array}
[c]{c}%
R(\sigma(t-1))<b\ln\psi(\sigma(t-1))=0,
\end{array}
\]
which is a contradiction, since the scalar curvature $R$ is nonnegative. In
conclusion, (\ref{46}) is true for all $x$.

Now from (\ref{40}), for all $r>0$, we have%
\[%
\begin{array}
[c]{c}%
\int_{B_{p}(r)}R\leq C(n)\mathrm{Vol}(B_{p}(r)).
\end{array}
\]
For any $q$ with $d(p,q)=\frac{3}{4}r$, it follows from (\ref{46}) and
(\ref{21}) that
\begin{equation}%
\begin{array}
[c]{c}%
\int_{B_{p}(r)}R\geq\int_{B_{q}(\frac{r}{4})}R\geq b\ln[n(r(x)-7)_{+}%
^{2}]\mathrm{Vol}(B_{q}(\frac{r}{4})).
\end{array}
\label{48}%
\end{equation}
The Bishop-Gromov relative volume comparison implies that%
\begin{equation}%
\begin{array}
[c]{c}%
c(n)\mathrm{Vol}(B_{q}(\frac{r}{4}))\geq\mathrm{Vol}(B_{q}(2r))\geq
\mathrm{Vol}(B_{p}(r)).
\end{array}
\label{49}%
\end{equation}
By (\ref{48}) and (\ref{49}) we infer that $\ln r\leq\frac{c(n)}{b}$, which is
a contradiction to $M$ being noncompact.
\end{proof}

In this following, we show that a gradient shrinking Sasaki-Ricci soliton with
the positive transverse holomorphic bisectional curvature must be compact. The
argument is the same as the proof of Theorem \ref{Thm}. We let $(M,g^{T}%
,\psi,X)$ be a gradient shrinking Sasaki-Ricci soliton such that
\begin{equation}%
\begin{array}
[c]{c}%
R_{i\overline{j}}+\psi_{i\overline{j}}=2ng_{i\overline{j}},\text{ \textrm{and}
}\psi_{ij}=0.
\end{array}
\label{2a}%
\end{equation}

\begin{lemma}
For a Sasakian manifold $(M,\xi,\eta,g,\Phi)$. Let $f$ be a basic real
function, then
\begin{equation}%
\begin{array}
[c]{c}%
\Delta_{B}f_{i\overline{j}}=\nabla_{\overline{j}}\nabla_{i}\Delta
_{B}f-R_{i\overline{j}k\overline{l}}f_{l\overline{k}}+\frac{1}{2}%
R_{i\overline{k}}f_{k\overline{j}}+\frac{1}{2}R_{k\overline{j}}f_{i\overline
{k}}-f_{i\overline{j}}.
\end{array}
\label{2}%
\end{equation}

\end{lemma}

\begin{proof}
By the covariant differentiation commutation formula, we obtain%
\[%
\begin{array}
[c]{lll}%
\nabla_{\overline{k}}\nabla_{k}f_{i\overline{j}} & = & \nabla_{k}%
\nabla_{\overline{k}}f_{i\overline{j}}+R_{k\overline{k}i\overline{p}%
}f_{p\overline{j}}-R_{k\overline{k}p\overline{j}}f_{i\overline{p}}\\
& = & \nabla_{k}\nabla_{\overline{k}}f_{i\overline{j}}+(R_{i\overline{p}%
}-g_{i\overline{p}})f_{p\overline{j}}-(R_{p\overline{j}}-g_{p\overline{j}%
})f_{i\overline{p}}\\
& = & \nabla_{k}\nabla_{\overline{k}}f_{i\overline{j}}+R_{i\overline{p}%
}f_{p\overline{j}}-R_{p\overline{j}}f_{i\overline{p}},
\end{array}
\]
and%
\[%
\begin{array}
[c]{lll}%
\nabla_{k}\nabla_{\overline{k}}f_{i\overline{j}} & = & \nabla_{k}%
\nabla_{\overline{j}}f_{i\overline{k}}=\nabla_{\overline{j}}\nabla
_{k}f_{i\overline{k}}-R_{k\overline{j}i\overline{p}}f_{p\overline{k}%
}+R_{k\overline{j}p\overline{k}}f_{i\overline{p}}\\
& = & \nabla_{\overline{j}}\nabla_{i}f_{k\overline{k}}-R_{i\overline
{j}k\overline{p}}f_{p\overline{k}}+(R_{p\overline{j}}-g_{p\overline{j}%
})f_{i\overline{p}}.
\end{array}
\]
Hence,%
\[%
\begin{array}
[c]{lll}%
\Delta_{B}f_{i\overline{j}} & = & \frac{1}{2}(\nabla_{k}\nabla_{\overline{k}%
}+\nabla_{\overline{k}}\nabla_{k})f_{i\overline{j}}\\
& = & \nabla_{\overline{j}}\nabla_{i}\Delta_{B}f-R_{i\overline{j}k\overline
{l}}f_{l\overline{k}}+\frac{1}{2}(R_{i\overline{k}}f_{k\overline{j}%
}+R_{k\overline{j}}f_{i\overline{k}})-f_{i\overline{j}}.
\end{array}
\]

\end{proof}

Then we have the following formula for the Ricci curvature $R_{i\overline{j}}$.

\begin{lemma}
\label{lemma}Let $(M,g^{T},\psi,X)$ be a gradient shrinking Sasaki-Ricci
soliton, then%
\[
\Delta_{B,\psi}R_{i\overline{j}}=2n(R_{i\overline{j}}-g_{i\overline{j}%
})-R_{l\overline{k}}R_{i\overline{j}k\overline{l}}.
\]
where $\Delta_{B,\psi}$ is the weighted basic Laplacian defined by
$\Delta_{B,\psi}R_{i\overline{j}}=\Delta_{B}R_{i\overline{j}}-g^{k\overline
{l}}\nabla_{k}\psi\nabla_{\overline{l}}R_{i\overline{j}}.$
\end{lemma}

\begin{proof}
Using the equation of the gradient shrinking Sasaki-Ricci soliton (\ref{2a})
and (\ref{2}) with $\psi=f$, we obtain%
\[%
\begin{array}
[c]{lll}%
\Delta_{B,\psi}R_{i\overline{j}} & = & -\Delta_{B}\psi_{i\overline{j}}%
-\psi_{k}\nabla_{\overline{k}}R_{i\overline{j}}\\
& = & -\nabla_{\overline{j}}\nabla_{i}\Delta_{B}\psi+R_{i\overline
{j}k\overline{l}}\psi_{l\overline{k}}-\frac{1}{2}(R_{i\overline{k}}%
\psi_{k\overline{j}}+R_{k\overline{j}}\psi_{i\overline{k}})+\psi
_{i\overline{j}}-\psi_{k}\nabla_{\overline{k}}R_{i\overline{j}}\\
& = & \nabla_{\overline{j}}(R_{i\overline{k}}\psi_{k}-\psi_{i})+R_{i\overline
{j}k\overline{l}}\psi_{l\overline{k}}-\frac{1}{2}(R_{i\overline{k}}%
\psi_{k\overline{j}}+R_{k\overline{j}}\psi_{i\overline{k}})+\psi
_{i\overline{j}}-\psi_{k}\nabla_{\overline{k}}R_{i\overline{j}}\\
& = & \nabla_{\overline{j}}R_{i\overline{k}}\psi_{k}+R_{i\overline
{j}k\overline{l}}\psi_{l\overline{k}}+\frac{1}{2}(R_{i\overline{k}}%
\psi_{k\overline{j}}-R_{k\overline{j}}\psi_{i\overline{k}})-\psi_{k}%
\nabla_{\overline{k}}R_{i\overline{j}}\\
& = & R_{i\overline{j}k\overline{l}}\psi_{l\overline{k}}+\frac{1}%
{2}(R_{i\overline{k}}\psi_{k\overline{j}}-R_{k\overline{j}}\psi_{i\overline
{k}})\\
& = & R_{i\overline{j}k\overline{l}}(2ng_{l\overline{k}}-R_{l\overline{k}%
})+\frac{1}{2}[R_{i\overline{k}}(2ng_{k\overline{j}}-R_{k\overline{j}%
})-R_{k\overline{j}}(2ng_{i\overline{k}}-R_{i\overline{k}})]\\
& = & 2n(R_{i\overline{j}}-g_{i\overline{j}})-R_{l\overline{k}}R_{i\overline
{j}k\overline{l}}%
\end{array}
\]
where we have used $\nabla_{\overline{j}}R_{i\overline{k}}=-\nabla
_{\overline{j}}\psi_{i\overline{k}}=-\nabla_{\overline{k}}\psi_{i\overline{j}%
}=\nabla_{\overline{k}}R_{i\overline{j}}$ in the fifth equality.
\end{proof}

\begin{theorem}
Let $(M,g^{T},\psi,X)$ be a gradient shrinking Sasaki-Ricci soliton with
positive transverse holomorphic bisectional curvature. Then $(M,g)$ must be compact.
\end{theorem}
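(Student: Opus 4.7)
The plan is to argue by contradiction along the lines of Munteanu--Wang \cite{mw2}: assume $M$ is noncompact and derive a contradiction using the level set foliation of the potential $\psi$ together with positive sectional curvature.

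The first step is to translate the soliton equation (\ref{20}) into geometric control on $\psi$. Rewriting it as $\mathrm{Hess}(\psi) = 2ng - \mathrm{Ric}$ and using that positive sectional curvature implies positive Ricci curvature, we obtain $\mathrm{Hess}(\psi) < 2ng$, with the strict inequality under reasonable curvature lower bounds. Since $\psi$ is basic, $\nabla\psi \in D$, so the Reeb field $\xi$ is tangent to each level set $\Sigma_r := \{\rho = r\}$, reducing the analysis to the transverse direction. The identity $R + |\nabla\psi|^2 = (4n-2)\psi$ from (\ref{21a}), together with the lemma $\int_{D(r)} R \leq 2n(2n+1)\,\mathrm{Vol}(D(r))$ just established, gives quantitative integral control on $R$ and pointwise control on $|\nabla \psi|^2$.

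The second step is to use positive sectional curvature to bound the intrinsic geometry of each level set $\Sigma_r$. The second fundamental form of $\Sigma_r$ with respect to the unit normal $\nabla\rho/|\nabla\rho|$ is $\mathrm{Hess}(\rho)/|\nabla\rho|$ restricted to $T\Sigma_r$, and the soliton equation expresses this in terms of $\mathrm{Ric}$ and $\psi$. Via the Gauss equation, this produces intrinsic curvature bounds on $\Sigma_r$ in the transverse directions. A Bonnet--Myers-type argument applied to the transverse K\"ahler component then bounds the transverse diameter of $\Sigma_r$ uniformly in $r$. To upgrade this to a contradiction, I would combine the uniform diameter bound with the quadratic growth (\ref{21}) of $\psi$, showing that the gradient flow of $\psi$ cannot foliate an unbounded region while keeping the transverse diameter bounded, which forces $M$ to be compact.

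The main obstacle I anticipate is the Bonnet--Myers step in the Sasaki setting. Because the sectional curvatures of planes containing $\xi$ are rigidly equal to $1$ (cf.\ (\ref{eqn-SR3})), only the transverse K\"ahler sectional curvatures are available to prove diameter bounds on $\Sigma_r$, and one must verify that these transverse sectional curvatures enjoy a uniform positive lower bound along the flow. This will require carefully combining the soliton identity, the positivity of full sectional curvature, and the integral lemma (\ref{40}), while keeping track of the foliation structure induced by the Reeb field so that the transverse diameter bound and the Reeb direction together produce compactness of $M$ rather than of a lower-dimensional quotient.
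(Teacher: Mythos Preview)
Your proposal has a genuine gap, and in fact it does not follow the argument of \cite{mw2} despite citing it. The Munteanu--Wang strategy, which the paper adapts verbatim to the Sasaki setting, is \emph{not} a Bonnet--Myers argument on level sets of $\psi$. Instead it proceeds in three steps: (i) starting from the elliptic equation $\Delta_{B,\psi}R_{ij}=4n(R_{ij}-g_{ij})-2R_{kl}R_{ikjl}$, one shows that the smallest eigenvalue $\lambda$ of $Ric_{D}-g^{T}$ satisfies the barrier inequality $\Delta_{B,\psi}\lambda\leq (4n-2)\lambda$, and a maximum principle with the explicit comparison function $\lambda-\tfrac{b}{\psi}-\tfrac{nb}{\psi^{2}}$ yields the \emph{quantitative} lower bound $Ric_{D}-g^{T}\geq \tfrac{b}{\psi}$; (ii) integrating $\tfrac{d}{d\eta}R(\sigma(\eta))=-2(Ric_{D}-g^{T})(\nu,\nu)\leq -\tfrac{2b}{\psi}$ along integral curves of $-\nabla\psi/|\nabla\psi|^{2}$ gives $R\geq b\ln\psi$ everywhere; (iii) this logarithmic growth of $R$ is then played off against the integral bound $\int_{D(r)}R\leq 2n(2n+1)\,\mathrm{Vol}(D(r))$ and Bishop--Gromov volume comparison to force $\ln r\leq c(n)/b$, contradicting noncompactness.

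Your proposal misses all three of these ingredients. The Bonnet--Myers step you outline cannot work as stated: positive sectional curvature gives no \emph{uniform} positive lower bound (the curvatures may decay at infinity), so you have no diameter bound on $\Sigma_{r}$ for large $r$. You correctly identify this as the main obstacle, but the remedy you sketch---``combining the soliton identity, positivity of sectional curvature, and the integral lemma''---is precisely the hard analytic content of steps (i)--(ii) above, and it does not produce a diameter bound; it produces a \emph{decaying} Ricci lower bound $b/\psi$, which is then leveraged through scalar curvature growth rather than through any compactness of level sets. Moreover, even if one had uniform transverse diameter bounds on $\Sigma_{r}$, this alone does not contradict noncompactness (a round cylinder has level sets of constant diameter), so the final implication in your second paragraph is also unjustified.
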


\begin{proof}
From Lemma \ref{lemma}, we have
\begin{equation}
\Delta_{B,\psi}(R_{i\overline{j}}-g_{i\overline{j}})=(2n-1)(R_{i\overline{j}%
}-g_{i\overline{j}})-(R_{l\overline{k}}-g_{l\overline{k}})R_{i\overline
{j}k\overline{l}},\label{51}%
\end{equation}
and for all nonzero vector $v\in\Gamma(D)$
\[%
\begin{array}
[c]{c}%
(R_{i\overline{j}}-g_{i\overline{j}})v^{i}v^{\overline{j}}=\Sigma
_{k}R_{i\overline{j}k\overline{k}}v^{i}v^{\overline{j}}=\Sigma_{k}%
\mathrm{Rm}(v,\overline{v},e_{k},e_{\overline{k}})>0
\end{array}
\]
if the transverse holomorphic bisectional curvature is positive. Denote
$\mu(x)$ as the smallest positive eigenvalue of $Rc_{D}-g^{T}$ on $\Gamma(D)$
and let $v\in\Gamma(D)$ is an eigenvector corresponding to $\mu(x),$ then%
\[%
\begin{array}
[c]{c}%
(R_{l\overline{k}}-g_{l\overline{k}})R_{i\overline{j}k\overline{l}}%
v^{i}v^{\overline{j}}=(R_{l\overline{k}}-g_{l\overline{k}})\mathrm{Rm}%
(v,\overline{v},e_{k},e_{\overline{l}}).
\end{array}
\]
Diagonalizing the Ricci tensor minus transverse K\"{a}hler metric so that
$R_{l\overline{k}}-g_{l\overline{k}}=\mu_{k}\delta_{lk}$. Since the transverse
holomorphic bisectional curvature is nonnegative, we have%
\begin{equation}%
\begin{array}
[c]{c}%
(R_{l\overline{k}}-g_{l\overline{k}})R_{i\overline{j}k\overline{l}}%
v^{i}v^{\overline{j}}=\Sigma_{k}\mathrm{Rm}(v,\overline{v},e_{k}%
,e_{\overline{k}})\mu_{k}\geq0.
\end{array}
\label{52}%
\end{equation}
From (\ref{51}) and (\ref{52}), we see that $\mu$ satisfies the following
differential inequality in the sense of barriers
\begin{equation}
\Delta_{B,\psi}\mu\leq(2n-1)\mu.\label{53}%
\end{equation}

As above we can get a global lower bound for $\mu$ by using maximum principle.
Since
\[%
\begin{array}
[c]{lll}%
\Delta_{B,\psi}\frac{1}{\sqrt{\psi}} & = & (2n-1)\frac{1}{\sqrt{\psi}}%
-\frac{1}{\sqrt{\psi3}}[n(2n+1)-\frac{3}{4}\frac{\left\vert \nabla
\psi\right\vert ^{2}}{\psi}]\\
& \geq & (2n-1)\frac{1}{\sqrt{\psi}}-n(2n+1)\frac{1}{\sqrt{\psi3}}%
\end{array}
\]
and
\[%
\begin{array}
[c]{lll}%
\Delta_{B,\psi}\frac{1}{\sqrt{\psi^{3}}} & = & 3(2n-1)\frac{1}{\sqrt{\psi^{3}%
}}-\frac{1}{\sqrt{\psi^{5}}}[3n(2n+1)-\frac{15}{4}\frac{\left\vert \nabla
\psi\right\vert ^{2}}{\psi}]\\
& \geq & 3(2n-1)\frac{1}{\sqrt{\psi^{3}}}-3n(2n+1)\frac{1}{\sqrt{\psi^{5}}}.
\end{array}
\]
Therefore, using these inequalities and combining with (\ref{53}), the
function%
\[%
\begin{array}
[c]{c}%
\phi:=\mu-\frac{b}{\sqrt{\psi}}-\frac{2nb}{\sqrt{\psi3}},
\end{array}
\]
for a sufficiently small positive constant $b$, will satisfy the differential
inequality
\[%
\begin{array}
[c]{l}%
\Delta_{B,\psi}\phi\leq(2n-1)\phi-\frac{n(6n-5)b}{\sqrt{\psi^{3}}}%
+\frac{6n^{2}(2n+1)b}{\sqrt{\psi^{5}}}.
\end{array}
\]
Which will get that $\phi\geq0$ on $M$ by the lower bound estimate for the
potential function $\psi$. So one obtain%
\[%
\begin{array}
[c]{c}%
Rc_{D}-g^{T}\geq\frac{b}{\sqrt{\psi}}\geq\frac{b}{\psi}%
\end{array}
\]
on $M$ for some positive constant $b$ and the scalar curvature will satisfy
$R\geq b\ln\psi$ on $M$. Then, as the same arguments in the proof of Theorem
\ref{Thm}, we can get that $M$ is compact.
\end{proof}


\begin{thebibliography}{9999}                                                                                             %


\bibitem[BW]{bw}C. B\"{o}hm and B. Wilking, \textit{Manifolds with positive
curvature operators are space forms}, Ann. Math. 167 (2008), 1079-1097.

\bibitem[CCLW]{cclw}D.-C. Chang, S.-C. Chang, C. Lin and C.-T. Wu,
\textit{Analytic foliation divisorial contraction on Sasakian manifolds of
dimension five}, arXiv: 2203.01736.

\bibitem[CHLW]{chlw}S.-C. Chang, Y.-B. Han, C. Lin and C.-T. Wu, $L^{4}%
$\textit{-Bound of the transverse Ricci curvature under the Sasaki-Ricci
flow}, J. Math. Study 58 (2025), no. 1, 36-59.

\bibitem[CJ]{cj}T. Collins and A. Jacob, \textit{On the convergence of the
Sasaki-Ricci flow}, Analysis, complex geometry, and mathematical physics: in
honor of Duong H. Phong, 11-21, Contemp. Math., 644, Amer. Math. Soc.,
Providence, RI, 2015.

\bibitem[CLL]{cll}S.-C. Chang, F. Li and C. Lin, \textit{Geometry of shrinking
Sasaki-Ricci solitons I: fundamental equations and characterization of
rigidity}, arXiv:2502.16148.

\bibitem[CLLW]{cllw}S.-C. Chang, F. Li, C. Lin and C.-T. Wu, \textit{On the
existence of conic Sasaki-Einstein metrics on Log Fano Sasakian manifolds}, arXiv:2406.16430.

\bibitem[CLY]{cly}B. Chow, P. Lu and B. Yang, \textit{A lower bound for the
scalar curvature of noncompact nonflat Ricci shrinkers}, C. R. Math. Acad.
Sci. Paris 349 (2011), 1265-1267.

\bibitem[H]{h}R. Hamilton, \textit{The formation of singularities in the Ricci
flow,} Surveys in Differential Geom. 2 (1995), 7-136, International Press.

\bibitem[I]{i}T. Ivey, \textit{Ricci solitons on compact three-manifolds},
Differential Geom. Appl. 3 (1993), no.4, 301-307.

\bibitem[LT]{lt}P. Li and L.F. Tam, \textit{Harmonic functions and the
structure of complete manifolds}, J. Differ. Geom. 35 (1992) 359-383.

\bibitem[N]{n}A. Naber, \textit{Noncompact shrinking }$4$\textit{-solitons
with nonnegative curvature}, J. Reine Angew. Math., 645 (2010), 125-153.

\bibitem[Na]{na}M. Nakai,\textit{ On Evans potentia}l, Proc. Japan Acad. 38
(1962), 624-629.

\bibitem[MW1]{mw1}O. Munteanu and J. Wang, \textit{Topology of K\"{a}hler
Ricci solitons}, J. Differ. Geom. 100 (2015), no. 1, 109-128.

\bibitem[MW2]{mw2}O. Munteanu and J. Wang, \textit{Positively curved shrinking
Ricci solitons are compact}, J. Differ. Geom. 106 (2017), 499-505.

\bibitem[P]{p}G. Perelman, \textit{Ricci flow with surgery on three-manifolds}%
, arXiv:math/0303109.

\bibitem[SWZ]{swz}K. Smoczyk, G. Wang and Y. Zhang, \textit{The Sasaki-Ricci
flow}, Internat. J. Math. 21 (2010), no. 7, 951-969.

\bibitem[WZ]{wz}G. Wu and S. Zhang, \textit{Remarks on shrinking gradient
K\"{a}ler-Ricci solitons with positive bisectional curvature}, C. R. Acad.
Sci. Paris, Ser. I 354 (2016) 713-716.
\end{thebibliography}
\end{document}